\newtheorem{theorem}{Theorem}[section]
\newtheorem{lemma}[theorem]{Lemma}
\newtheorem{conj}[theorem]{Conjecture}
\newtheorem{proposition}[theorem]{Proposition}
\newtheorem{remark}[theorem]{Remark}
\newcommand{\Gal}{\operatorname{Gal}}
\newcommand{\Fil}{\operatorname{Fil}}
\newcommand{\DD}{\mathbb{D}}
\newcommand{\QQ}{\mathbb{Q}}
\newcommand{\Qp}{\mathbb{Q}_p}
\newcommand{\Zp}{\mathbb{Z}_p}
\newcommand{\Dp}{\mathbb{D}_p}
\newcommand{\ZZ}{\mathbb{Z}}
\newcommand{\cM}{\mathcal{M}}
\newcommand{\rank}{\mathrm{rank}}
\newcommand{\p}{\mathfrak{p}}
\newcommand{\vp}{\varphi}
\newcommand{\cL}{\mathcal{L}}
\newcommand{\cH}{\mathcal{H}}
\newcommand{\HIw}{H^1_{\mathrm{Iw}}}
\newcommand{\GL}{\mathrm{GL}}
\newcommand{\col}{\mathrm{Col}}
\newcommand{\image}{\operatorname{Im}}
\newcommand{\coker}{\operatorname{coker}}
\newcommand{\loc}{\mathrm{loc}}
\newcommand{\Hom}{\mathrm{Hom}}
\newcommand{\Sel}{\mathrm{Sel}}
\newcommand{\Ind}{\mathrm{Ind}}
\newcommand{\lb}{\llbracket}
\newcommand{\rb}{\rrbracket}
\newcommand{\ZpX}{\Zp\lb X\rb}
\newtheorem{lettertheorem}{Theorem}
\begin{document}

\title{Functional equations for multi-signed Selmer groups}
\subjclass[2010]{11R23 (primary); 11R18, 11F80 (secondary)}

\begin{abstract}
We study the functional equation for the multi-signed Selmer groups for non-ordinary motives whose Hodge-Tate weights are $0$ and $1$, defined by  B\"uy\"ukboduk and the first named author in \cite{buyukboduklei0}. This generalizes simultaneously Greenberg's result for ordinary motives in \cite{greenberg89} and Kim's result for supersingular elliptic curves in \cite{kim08}.
\end{abstract}

\author{Antonio Lei}
\address{Antonio Lei\newline
D\'epartement de Math\'ematiques et de Statistique\\
Universit\'e Laval, Pavillion Alexandre-Vachon\\
1045 Avenue de la M\'edecine\\
Qu\'ebec, QC\\
Canada G1V 0A6}
\email{antonio.lei@mat.ulaval.ca}

\author{Gautier Ponsinet}
\address{Gautier Ponsinet\newline
D\'epartement de Math\'ematiques et de Statistique\\
Universit\'e Laval, Pavillion Alexandre-Vachon\\
1045 Avenue de la M\'edecine\\
Qu\'ebec, QC\\
Canada G1V 0A6}
\email{gautier.ponsinet.1@ulaval.ca}

\keywords{Abelian varieties, functional equations, non-ordinary primes.}
\thanks{The authors' research is supported by the Discovery Grants Program 05710 of NSERC}

\maketitle

\section{Introduction}

Let $p$ be an odd prime and $F$ a fixed number field in which all primes above $p$ are unramified. Furthermore, we assume that $F/\QQ$ is an abelian extension with $[F:\QQ]$ coprime to $p$.  Let $\cM_{/F}$ be a motive defined over $F$. In \cite{greenberg89},
Greenberg developed an Iwasawa theory for a $p$-ordinary motive $\cM$. In particular, he showed that its $p$-Selmer group over the $\Zp$-cyclotomic extension $F_\infty$ of $F$ satisfies a functional equation, namely,
\[
\Sel_p(\cM/F_\infty)\sim\Sel_p(\widehat{\cM}/F_\infty)^\iota,
\]
where  $\widehat{\cM}$ is the dual motive of $\cM$, $A\sim B$ means that $A$ and $B$ are pseudo-isomorphic as $\Zp\lb\Gal(F_\infty/F)\rb$-modules and $\iota$ is the involution on the Iwasawa algebra that sends $\sigma$ to $\sigma^{-1}$ for $\sigma\in\Gal(F_\infty/F)$. This result has been generalized to elliptic curves with supersingular reduction at $p$ by Kim \cite{kim08}. That is, 
\[
\Sel_p^\pm(E/F_\infty)\sim\Sel_p^\pm(E/F_\infty)^\iota,
\]
where $E$ is an elliptic curve with $a_p(E)=0$ and $\Sel_p^\pm(E/F_\infty)$ are Kobayashi's plus and minus Selmer groups from \cite{kobayashi03}.

Let $\cM_p$ be the $p$-adic realization of $\cM$. Let $g:=\dim_{\Qp}(\Ind_{F/\QQ}\cM_p)$ and $g_\pm:=\dim_{\Qp}(\Ind_{F/\QQ}\cM_p)^{c=\pm 1}$, where $c$ is the complex conjugation. We shall assume that $\cM_p$ satisfies:
\begin{itemize}
\item[(H.HT)] The Hodge-Tate weights of $\cM_p$ are either $0$ or $1$;
\item[(H.crys)] $\cM_p$ is crystalline at all primes $\p|p$ of $F$;
\item[(H.Frob)] The slopes of the Frobenius on the Dieudonn\'e module $\DD_p(\cM_p)$ lie inside $(0,-1]$ and that $1$ is not an eigenvalue;
\item[(H.P)]  $g_{+}=g_-$ and   $\dim_{\Qp}\Fil^0\Dp(\cM_p)=g_-$.
\end{itemize}
Note that (H.HT) would essentially force us to restrict our attention to abelian varieties. In this case,  (H.crys) means that it has good reduction at all primes above $p$. If furthermore it has  supersingular reduction at $p$, then the Frobenius has constant slope $-1/2$ on the Dieudonn\'e module, which implies (H.Frob). In general, if $\cM$ is irreducible and pure, then Tate's conjecture for $\cM_p$ would imply (H.P) (c.f. \cite[Remark~1.3]{buyukboduklei0}).

Under these hypotheses, a family of multi-signed Selmer groups $\Sel_I(\cM/F_\infty)$ has been defined using the theory of Coleman maps in \cite{buyukboduklei0} (see \S\ref{S:defnSel} below for a review). Here, $I$ corresponds to a choice of a sub-basis of $\DD_p(\cM_p)$. These Selmer groups generalize Kobayashi's plus and minus Selmer groups in \cite{kobayashi03} and Sprung's $\#/\flat$-Selmer groups in \cite{sprung09} for supersingular elliptic curves.

The main result of this article is the following theorem.
\begin{lettertheorem}[Theorem~\ref{thm:main}]\label{thm:A}Suppose that $H^0(F_\infty,\cM_p^\vee)=0$, where $\cM_p^\vee$ denotes the Pontryagin dual of $\cM_p$. Furthermore, suppose  that $I$ is a choice of basis satisfying $\#I=g_+=g_-$. Then, we have the functional equation
\[\Sel_I(\cM/F_\infty)\sim\Sel_{I^c}(\widehat{\cM}/F_\infty)^\iota,\]
where $I^c$ corresponds the sub-basis of $\DD_p(\cM_p^\vee)$ that is dual to $I$.
\end{lettertheorem}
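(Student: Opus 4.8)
The plan is to adapt the Poitou--Tate argument used by Greenberg \cite{greenberg89} for ordinary motives and by Kim \cite{kim08} for supersingular elliptic curves. Write $\Lambda=\Zp\lb\Gal(F_\infty/F)\rb$ and fix a finite set $S$ of places of $F$ containing the archimedean places, the primes above $p$, and the primes of bad reduction. By the definition recalled in \S\ref{S:defnSel}, $\Sel_I(\cM/F_\infty)$ is the subgroup of $H^1(G_{F,S},\cM_p)$ cut out by the unramified local condition at each $v\in S$ with $v\nmid p$ and, at each $\p\mid p$, by the kernel of the $I$-component $\col_{I,\p}$ of the Coleman map of \cite{buyukboduklei0} (transported from Iwasawa cohomology to $H^1(F_{\infty,\p},\cM_p)$ via local Tate duality). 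In this formulation the asserted pseudo-isomorphism is an instance of Greenberg's general principle \cite[\S4]{greenberg89}: if two Selmer structures attached to $\cM_p$ and to its dual $\widehat{\cM}_p$ have, at every place of $S$, local conditions that are exact annihilators of one another under local Tate duality, then the two Selmer groups are pseudo-isomorphic as $\Lambda$-modules up to the involution $\iota$. This requires exactly two ingredients: first, that the global error terms in the Poitou--Tate sequence are pseudo-null, which is where $H^0(F_\infty,\cM_p^\vee)=0$ enters, forcing the relevant $H^0$ and (by duality) $H^2$ terms to vanish or be $\Lambda$-torsion; and second, the place-by-place orthogonality of the local conditions.

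For $v\in S$ with $v\nmid p$ the orthogonality is classical and goes through verbatim as in \cite{greenberg89,kim08}: since $F_\infty/F$ is the cyclotomic $\Zp$-extension, $v$ has only finitely many primes above it in $F_\infty$, so local Iwasawa cohomology at $v$ is a finite product of ordinary local cohomology groups, and under local Tate duality the unramified subgroup for $\cM_p$ and that for $\widehat{\cM}_p$ are exact annihilators; the vanishing of $H^0(F_\infty,\cM_p^\vee)$ again handles the finitely many bad primes.

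The heart of the matter is the local condition at $\p\mid p$: one must show that $\ker(\col_{I,\p})$ inside the Iwasawa cohomology of $\cM_p$ and the submodule defining $\Sel_{I^c}(\widehat{\cM})$ at $\p$ annihilate each other under the local Tate pairing and have complementary size. By \cite{buyukboduklei0} the Coleman maps are constructed from the Wach module: there is a $\Lambda$-isomorphism $\HIw(F_{\infty,\p},\cM_p)\cong N(\cM_p)^{\psi=1}$, and composing with $1-\vp$ and reading off coordinates in a fixed basis of $N(\cM_p)/\pi N(\cM_p)\cong\DD_p(\cM_p)$ produces the components $\col_{i,\p}\colon\HIw(F_{\infty,\p},\cM_p)\to\Lambda$, where (H.HT) is what makes the output land in $\Lambda$ rather than a larger distribution algebra. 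The duality $\cM_p\otimes\widehat{\cM}_p\to\Zp(1)$ induces a perfect pairing of Wach modules $N(\cM_p)\times N(\widehat{\cM}_p)\to N(\Zp(1))$ which is compatible with Perrin-Riou's cup-product pairing on Iwasawa cohomology and which, modulo $\pi$, pairs the fixed basis of $\DD_p(\cM_p)$ with its dual basis of $\DD_p(\cM_p^\vee)$. Tracing these compatibilities should yield a formula of the form $\langle x,y\rangle = u\cdot\sum_{i}\col_{i,\p}(x)\,\iota\!\big(\col^{\vee}_{i,\p}(y)\big)$ for a unit $u$, where $\col^{\vee}_{i,\p}$ is the Coleman map for $\widehat{\cM}_p$ in the dual basis. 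Since the condition at $\p$ for $\Sel_I(\cM)$ annihilates the coordinates indexed by $I$ while that for $\Sel_{I^c}(\widehat{\cM})$ annihilates the coordinates indexed by the complementary set $I^c$, every summand vanishes and the two conditions are orthogonal; the hypotheses that $\rank_{\Zp}\DD_p(\cM_p)$ is even and $\rank_{\Zp}\Fil^0\DD_p(\cM_p)=\rank_{\Zp}\DD_p(\cM_p)/2$ guarantee that $\cM_p$ and $\widehat{\cM}_p$ share the same Hodge--Tate profile, so that the Coleman maps on the two sides are normalized identically and have the expected images, which upgrades ``orthogonal'' to ``exact annihilators''. Plugging this into the Poitou--Tate argument of the first paragraph gives the theorem.

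The step I expect to be the main obstacle is the local computation at $p$: establishing the pairing formula linking Perrin-Riou's cup product on $\HIw(F_{\infty,\p},-)$ to the Coleman maps of \cite{buyukboduklei0} precisely enough to extract orthogonality, and bookkeeping the normalizations — the twist by $\Zp(1)$ responsible for the involution $\iota$, the identification $\HIw(F_{\infty,\p},\Zp(1))\cong\Lambda$, and the discrepancy (up to pseudo-null modules, hence harmless here) between $\HIw(F_{\infty,\p},-)$ and the free $\Lambda$-modules on which the Coleman coordinates live — while checking that the two rank hypotheses are exactly what forces the images of the Coleman maps for $\cM_p$ and $\widehat{\cM}_p$ to match.
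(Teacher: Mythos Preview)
Your overall strategy is on the right track, and the pairing formula you anticipate at $p$ is essentially what \S\ref{S:orthogonal} establishes: the paper derives
\[
[\cL_T(z),\cL_{T^*(1)}(z')]=\frac{\log(1+X)}{pX}\,\col_T(z)^t\cdot\col_{T^*(1)}(z')
\]
from Perrin--Riou's explicit reciprocity law (proved by Colmez) together with the factorizations \eqref{decomposition}, \eqref{decompositiondual} through the logarithmic matrices $M_T$, $M_{T^*(1)}$, rather than by unwinding Wach modules; but the conclusion, that $\ker\col_{T^*(1),I^c}$ is the \emph{exact} orthogonal complement of $\ker\col_{T,I}$ (Lemma~\ref{secOrth:lem2}), is the same. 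Note that this step does \emph{not} use the rank hypothesis $g_+=g_-$.

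The gap is in the global step. There is no black box in \cite{greenberg89} asserting that orthogonal local conditions automatically produce a pseudo-isomorphism of Selmer groups over $F_\infty$; what Greenberg actually supplies, recorded here as Proposition~\ref{sec5:propGreenberg}, is a \emph{criterion} for two cofinitely generated $\Lambda^1$-modules $X,Y$ to have pseudo-isomorphic duals: matching $\Zp$-coranks of $(X\otimes\Lambda^1/(f^e))^{\Gamma^1}$ for every $f,e$, and boundedness of $\#X^{\Gamma^1_n}[p^m]/\#Y^{\Gamma^1_n}[p^m]$. Verifying this criterion is the bulk of the proof (all of \S\ref{S:control}): one descends to the finite layers $F_n$ and to $p^m$-torsion, runs Poitou--Tate together with the global and local Euler-characteristic formulas there (Lemma~\ref{sec4:lemCtrl1}), and then proves a control theorem (Lemma~\ref{sec4:lemCtrl2}) to climb back to $\QQ_\infty$. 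It is in this finite-level Euler-characteristic count that the hypothesis $g_+=g_-=g/2$ enters---it makes $\chi_{\mathrm{glob.},F_n}(A_m)^{-1}$ cancel against $[H^1(\QQ_{p,n},A_m):H^1_I(\QQ_{p,n},A_m)]$ up to a bounded factor---and not, as you suggest, in normalizing the Coleman maps on the two sides. Since the theorem is asserted \emph{without} assuming the Selmer groups are $\Lambda$-cotorsion, the infinite-level Poitou--Tate shortcut you have in mind does not by itself yield a pseudo-isomorphism, and the finite-level bookkeeping is unavoidable.
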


In \cite{buyukboduklei0}, the multi-signed Selmer groups are conjectured to be cotorsion over $\Zp\lb\Gal(F_\infty/F)\rb$.  Furthermore, a main conjecture relating them to some multi-signed $p$-adic $L$-functions has been formulated. We remark that Theorem~\ref{thm:A} holds even without the cotorsionness of the Selmer groups. In fact, it gives evidence to the main conjecture because the $L$-functions are expected to satisfy a similar functional equation.

The structure of this paper is as follows. We first review the construction of multi-signed Selmer groups of \cite{buyukboduklei0} in \S\ref{S:defnSel}. We then show that the local conditions of these Selmer groups satisfy an orthogonality condition and a control theorem in \S\ref{S:orthogonal} and \S\ref{S:control} respectively. These two results allow us to conclude the proof of Theorem~\ref{thm:A} using techniques of Greenberg and Kim, which is the content of \S\ref{S:proof}. Finally, we explain in \S\ref{S:AV} that our result applies to abelian varieties and allows us to obtain the following theorem.

\begin{lettertheorem}[Theorem~\ref{thm:AV}]
Let $A$ be a  $g$-dimensional abelian variety defined over $F$ with supersingular reduction at all primes above $p$. Then, for any $I$ such that $\# I = g$, the dual Selmer groups $\Sel_I(A/F(\mu_{p^\infty}))^\vee$ and $\Sel_{I^c}(A^\vee/F(\mu_{p^\infty}))^{\vee,\iota}$ are pseudo-isomorphic $\Zp\lb\Gal(F_\infty/F)\rb$-modules.
\end{lettertheorem}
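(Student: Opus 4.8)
The plan is to deduce the theorem from Theorem~\ref{thm:A} applied to the motive $\cM=h^1(A)$ over $F$, normalised so that its $p$-adic realisation $\cM_p$ has Hodge--Tate weights $0$ and $1$; with this normalisation $\Sel_I(A/F(\mu_{p^\infty}))=\Sel_I(\cM/F(\mu_{p^\infty}))$. A preliminary remark makes the passage from the cyclotomic $\Zp$-extension $F_\infty$ to $F(\mu_{p^\infty})$ harmless. On the one hand, good reduction is preserved by arbitrary base change, so $A$ retains good supersingular reduction at the primes of $F(\mu_p)$ above $p$; hence the multi-signed local conditions of \cite{buyukboduklei0} together with the orthogonality and control statements of \S\ref{S:orthogonal}--\S\ref{S:control} are available over $F(\mu_{p^\infty})$ (this is precisely the feature an abelian variety has, and a general crystalline motive may lack, since the tame layer $F(\mu_p)/F$ is ramified at $p$). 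On the other hand, $\Gal(F(\mu_{p^\infty})/F_\infty)$ has order prime to $p$, so a pseudo-isomorphism of $\Zp\lb\Gal(F(\mu_{p^\infty})/F)\rb$-modules is the same as one of $\Zp\lb\Gal(F_\infty/F)\rb$-modules, and running the argument of \S\ref{S:proof} over this tower (eigenspace by eigenspace along the characters of the tame layer, if one prefers) produces it.

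It then remains to verify the hypotheses of Theorem~\ref{thm:A} for $\cM=h^1(A)$. Conditions (H.HT) and (H.crys) are immediate: the $p$-adic Tate module of $A$ has Hodge--Tate weights $0$ and $1$, each of multiplicity $g:=\dim A$, and good reduction is crystallinity at the primes above $p$. The numerical hypotheses follow from the Hodge structure of $h^1(A)$: $\rank_{\Zp}\DD_p(\cM_p)=2g$ is even and $\rank_{\Zp}\Fil^0\DD_p(\cM_p)=g=\tfrac{1}{2}\rank_{\Zp}\DD_p(\cM_p)$. For (H.Frob), supersingular reduction forces the Frobenius on $\DD_p(\cM_p)$ to have constant slope $-\tfrac{1}{2}$, as recalled in the introduction; the same applies to $A^\vee$, so $\DD_p(\cM_p^\vee)$ is isoclinic as well, of some slope in $\tfrac{1}{2}+\ZZ$, and in particular $1$ is not a Frobenius eigenvalue there.

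The one point requiring an argument is the vanishing $H^0(F_\infty,\cM_p^\vee)=0$. Fix a prime $\p\mid p$ of $F$ and a place of $F_\infty$ above it; since $(\cM_p^\vee)^{G_{F_\infty}}\subseteq(\cM_p^\vee)^{G_{F_{\infty,\p}}}$, it suffices to show that $\cM_p^\vee$ has no nonzero $G_{F_{\infty,\p}}$-fixed vector. Were there one, the fixed subspace of $\cM_p^\vee\otimes\Qp$ would be stable under $G_{F_\p}$ (as $G_{F_{\infty,\p}}$ is normal in it) and crystalline (being a subobject of a crystalline representation), hence a nonzero crystalline representation of $G_{F_\p}$ factoring through $\Gal(F_{\infty,\p}/F_\p)$. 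Since $F_{\infty,\p}/F_\p$ is an infinite subextension of the cyclotomic $\Zp$-extension, any such representation has integral Frobenius slopes; but slopes of $\vp$-submodules are inherited, so this would place an integral slope inside the isoclinic module $\DD_p(\cM_p^\vee)$ of slope in $\tfrac{1}{2}+\ZZ$ --- a contradiction. Hence the vanishing holds.

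With all hypotheses checked, Theorem~\ref{thm:A} gives $\Sel_I(\cM/F(\mu_{p^\infty}))\sim\Sel_{I^c}(\widehat{\cM}/F(\mu_{p^\infty}))^\iota$. Identifying $\widehat{\cM}$ with $h^1(A^\vee)$ via the Weil pairing --- a rank-one Tate twist enters here, but it does not affect the sub-bases of the Dieudonn\'e module, so $I^c$ is carried to the family of local conditions for $A^\vee$ dual to $I$ --- and passing to Pontryagin duals yields the stated pseudo-isomorphism. I expect the genuine obstacles to be the slope bookkeeping behind the local vanishing above, together with the careful tracking of the Weil-pairing duality and of the normalisations, so that $\widehat{h^1(A)}$ genuinely corresponds to $A^\vee$ endowed with the dual signed local conditions.
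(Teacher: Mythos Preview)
Your overall strategy matches the paper's: verify the hypotheses of Theorem~\ref{thm:main} (i.e.\ Theorem~A) for $T=T_p(A)$ and apply it. The checks of (H.HT), (H.crys), (H.Frob) and of the numerical condition $g_+=g_-=g/2$ are the same as in \S\ref{S:AV}, and your identification of $\widehat{\cM}$ with $h^1(A^\vee)$ up to a Tate twist is harmless, exactly because the paper's formulation in terms of $T^*(1)$ absorbs that twist.

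Where you diverge is in the verification of the vanishing hypothesis. The paper does not use your slope argument; instead it checks the two \emph{torsion-level} conditions (H.F) and (H.nT) of \S\ref{S:control} directly, quoting Imai's theorem for the finiteness of $A^\vee[p^\infty](F_\p(\mu_{p^\infty}))$ and Mazur's Lemma~5.11 (together with the standard pro-$p$ argument along the $\Zp$-extension) for $A^\vee(F_{\infty})[p^\infty]=0$. Your crystalline--slope argument is correct for the \emph{vector space} condition $H^0(F_{\infty,\p},V_p(A)^*)=0$: a crystalline subrepresentation factoring through $\Gal(F_{\infty,\p}/F_\p)\cong\Zp$ is an iterated extension of crystalline characters with that property, and such characters are powers of $\chi_{\mathrm{cyc}}^{p-1}$, hence have integral slope, contradicting the isoclinic slope $\tfrac12$. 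However, the proofs of Lemmas~\ref{sec4:lemCtrl1}--\ref{sec4:lemCtrl2} actually require the torsion vanishing (H.nT) $H^0(F_{\infty,\p},T^\dagger)=A^\vee[p^\infty](F_{\infty,\p})=0$ and the finiteness (H.F), and the vector-space statement does not formally imply these (there could be finite, non-divisible $G_{F_{\infty,\p}}$-invariants in $A^\vee[p^\infty]$ without any in $V_p(A^\vee)$). So your argument, while elegant, leaves a small gap relative to what \S\ref{S:control} consumes; the paper's citations to Imai and Mazur close it with no extra work. Your preliminary discussion about passing from $F_\infty$ to $F(\mu_{p^\infty})$ and about base-changing good reduction to $F(\mu_p)$ is also unnecessary: Theorem~\ref{thm:main} is already stated over $F(\mu_{p^\infty})$ and the constructions of \S\ref{S:defnSel} take place there from the outset.
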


\section*{Acknowledgment}
The authors would like to thank K\^az\i m B\"uy\"ukboduk, Daniel Delbourgo and Florian Sprung for interesting discussion during the preparation of this article. We would also like to thank the anonymous referee for very useful suggestions and comments which led to many improvements on the presentation of the article.

\section{Review of the construction of multi-signed Selmer groups}\label{S:defnSel}

Let us first introduce some notation. Let $\Gamma$ be the Galois group $\Gal(\Qp(\mu_{p^\infty})/\Qp)$. Given any unramified extension $K$ of $\Qp$, we shall abuse notation and write $\Gamma$ for the Galois group $\Gal(K(\mu_{p^\infty})/K)$ as well. 
We may decompose $\Gamma$ as $\Delta\times\Gamma^1$, where $\Delta$ is cyclic of order $p-1$ and $\Gamma^1 = \overline{\langle\gamma\rangle}$ is isomorphic to the additive group $\Zp$. We shall denote by $K_\infty$ the subfield of $K(\mu_{p^\infty})$ fixed by $\Delta$, thus $K_\infty$ is a $\Gamma^1$-extension of $K$ and we also set $K_n$ the subextention of $K_\infty$ such that $\Gamma^1/\Gamma^1_n \simeq \ZZ/p^n\ZZ$, where $\Gamma^1_n := \Gal(K_\infty/K_n)$.

We write $\Lambda$ for the Iwasawa algebra $\Zp\lb\Gamma\rb$. We may identify it with the set of power series $\sum_{n\ge0,\sigma\in\Delta}a_{n,\sigma}\cdot\sigma\cdot(\gamma-1)^n$ where $a_{n,\sigma}\in\Zp$. We  denote by $\iota :\Lambda \rightarrow \Lambda$ the involution induced by $\sigma \mapsto \sigma^{-1}$ for $\sigma \in \Gamma$ and we identify $\gamma-1$ with the indeterminate $X$. Similarly, we write $\Lambda^1 := \Zp\lb \Gamma^1\rb$, and we identify it with $\Zp\lb X\rb$ in the same way.

Let $M$ be a $\Lambda$-module, $\eta$ a Dirichlet character modulo $p$. We write $e_\eta=\frac{1}{p-1}\sum_{\sigma\in \Delta}\eta(\sigma)^{-1}\sigma\in\Zp[\Delta]$ for the idempotent corresponding to $\eta$. The $\eta$-isotypic component of $M$ is defined to be $e_\eta\cdot M$ and denoted by $M^\eta$. Note that we may regard $M^\eta$ as a $\Zp\lb X\rb$-module. We  denote by $M^\iota$ the $\Lambda$-module whose action is twisted by $\iota$. Also, we write $M^\vee := \Hom(M , \Qp/\Zp)$ the Pontryagin dual of $M$.

We define $\cH$ to be the set of elements $\sum_{n\ge0,\sigma\in\Delta}a_{n,\sigma}\cdot\sigma\cdot(\gamma-1)^n$ where $a_{n,\sigma}\in\Qp$ are such that the power series $\sum_{n\ge0}a_{n,\sigma}X^n$ converges on the open unit disc for all $\sigma\in\Delta$.

Suppose now that $\cM$ is a motive satisfying (H.HT), (H.crys) and (H.Frob) as in the introduction (but not (H.P) for the time being).  
 Fix a $G_F$ stable $\Zp$-lattice $T$ inside $\cM_p$ and write $T^\dagger$ for the Cartier dual $\Hom(T,\mu_{p^\infty})$. For each prime $\p|p$ of $F$, we write $\DD_{F_\p}(T)$ for the Dieudonn\'e module of the local representation $T|_{G_{F_\p}}$. Let
\[
\Dp(T)=\bigoplus_{\p|p}\DD_{F_\p}(T),
\]
which admits the Frobenius action $\vp:\Dp(T)\rightarrow\Dp(T)\otimes\Qp$ and a filtration $\Fil^\bullet\Dp(T)$. We assume:
\begin{itemize}
\item[(H.P')] $\rank_{\Zp}\Fil^0\Dp(T)=g_-$.
\end{itemize}

Fix a $\Zp$-basis $v_1,\ldots, v_g$ of $\Dp(T)$ such that $v_1,\ldots,v_{g_-}$ generate $\Fil^0\Dp(T)$. By (H.HT), the matrix of $\vp$ with respect to this basis is of the form
\[
C_\vp=C\left(
\begin{array}{c|c}
I_{g_-}&0\\ \hline
0&\frac{1}{p}I_{g_+}
\end{array}
\right),
\]
where $I_{g_\pm}$ denote the identity matrices of rank $g_\pm$ and $C$ is some matrix inside $\GL_g(\Zp)$. As in \cite[Definition~2.4]{buyukboduklei0}, we may define for $n\ge1$, 
\begin{equation}\label{eq:defnmatrix}
C_n=
\left(
\begin{array}{c|c}
I_{g_-}&0\\ \hline
0&\Phi_{p^n}(1+X)I_{g_+}
\end{array}
\right)C^{-1}
\quad\text{and}\quad
M_n=\left(C_\vp\right)^{n+1}C_{n}\cdots C_{1},
\end{equation}
where $\Phi_{p^n}$ denotes the $p^n$-th cyclotomic polynomial. By Proposition~2.5 in \textit{op. cit.}, the sequence $M_n$, $n\ge1$ converges to some $g\times g$ logarithmic matrix over $\cH$, which is denoted by $M_T$.

For each prime  $\p|p$ of $F$, we write
\[
\HIw(F_\p,T)=\varprojlim H^1(F_\p(\mu_{p^n}),T),
\]
where the connecting map is the corestriction maps. This allows us to define
\[
\HIw(F_p,T)=\bigoplus_{\p|p}\HIw(F_\p,T).
\]
The dual of Perrin-Riou's exponential map from \cite{perrinriou94} gives a $\Lambda$-homomorphism
\[
\cL_T:\HIw(F_p,T)\rightarrow \cH\otimes_{\Zp}\Dp(T).
\]
By \cite[Theorem~2.13]{buyukboduklei0}, the logarithmic matrix $M_T$ allows us to decomposes $\cL_T$ into
\begin{equation} \label{decomposition}
\cL_T=\begin{pmatrix}
v_1&\cdots&v_g
\end{pmatrix}\cdot M_T\cdot \begin{pmatrix}
\col_{T,1}\\ \vdots \\ \col_{T,g}
\end{pmatrix},
\end{equation}
where $\col_{T,i}$, $i=1,\ldots,g$ are $\Lambda$-homomorphisms from $\HIw(F_p,T)$ to $\Lambda$, and we shall denote by $\col_T$ the column vector that appears on the right-hand side.
The images of these maps are described in \S2.5 of \textit{op. cit.}. For $I\subset\{1,\ldots,g\}$, we shall write  $\col_{T,I}$ for the direct sum $\bigoplus_{i\in I}\col_{T,i}$. We record the following result (Proposition~2.21 and Corollary~2.22 of \textit{op. cit.}), which we shall make use of later on in this article.

\begin{proposition} \label{secReview:prop1}
Let $I\subset\{1,\ldots,g\}$ and $\eta$ a character on $\Delta$. Then, $\image\left(\col_{T,I}\right)^\eta$ is contained in a free $\ZpX$-module of rank $\#I$ inside $\bigoplus_{i\in I}\ZpX$. Furthermore, this inclusion is of finite index.
\end{proposition}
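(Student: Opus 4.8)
The plan is to reduce the statement to known results about the images of individual Coleman maps $\col_{T,i}$, already recorded in \S2.5 of \cite{buyukboduklei0}, together with the decomposition \eqref{decomposition} and standard properties of the logarithmic matrix $M_T$. First I would pass to the $\eta$-component, where $\cH^\eta$ is identified with the ring of power series over $\Qp$ converging on the open unit disc and $\Lambda^\eta$ with $\ZpX$. The map $\cL_T$ lands in $\cH\otimes_{\Zp}\Dp(T)$, and after the decomposition \eqref{decomposition} the image of $\col_{T,i}^\eta$ is a $\ZpX$-submodule of $\ZpX$, i.e.\ an ideal; the results of \cite{buyukboduklei0} describe each such ideal explicitly (it is generated by a product of cyclotomic-type factors coming from the $C_n$, up to a unit), and in particular it is nonzero. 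Hence $\image(\col_{T,i})^\eta$ is a nonzero ideal of $\ZpX$, which is automatically a free $\ZpX$-module of rank one sitting inside $\ZpX=\bigoplus_{\{i\}}\ZpX$ with finite index (finite index because $\ZpX$ is a two-dimensional regular local ring and a nonzero principal ideal has finite colength — this is the content of the Weierstrass preparation theorem at the level of colengths, or simply that $\ZpX/(f)$ is finite when $f$ has nonzero reduction mod $p$ times a distinguished polynomial of positive degree... more precisely one checks that the $\Qp$-dimension after inverting $p$ is finite, which is what ``finite index'' means here).

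Next I would upgrade this from a single index $i$ to a subset $I$. Consider the composite
\[
\HIw(F_p,T)\xrightarrow{\ \col_{T,I}^\eta\ }\bigoplus_{i\in I}\ZpX.
\]
Its image is a $\ZpX$-submodule $N$ of $\bigoplus_{i\in I}\ZpX$. Projecting to the $i$-th coordinate gives a surjection $N\twoheadrightarrow\image(\col_{T,i})^\eta$, a nonzero ideal; so $N$ has rank at least $1$ in each coordinate direction, hence rank exactly $\#I$ after inverting $p$ (it is a submodule of a rank-$\#I$ free module, and surjects onto a rank-one module in each of the $\#I$ coordinates, so localizing at the zero ideal of $\ZpX[1/p]$... actually the clean way: $N\otimes_{\ZpX}\mathrm{Frac}(\ZpX)$ is a subspace of $\mathrm{Frac}(\ZpX)^{\#I}$ whose image under each coordinate projection is all of $\mathrm{Frac}(\ZpX)$, but that alone does not force it to be everything). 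Let me instead take the ``diagonal'' approach: choose for each $i\in I$ an element $z_i\in\HIw(F_p,T)$ with $\col_{T,i}^\eta(z_i)=f_i\neq 0$; then the $\ZpX$-span of $\{\col_{T,I}^\eta(z_i)\}_{i\in I}$ has, as its matrix in the standard basis, an $\#I\times\#I$ matrix that is upper-triangular-up-to-reordering with the $f_i$ on the diagonal only if we are lucky, so in general its determinant is some $d\in\ZpX$ which I must argue is nonzero. To see $d\neq 0$: the full map $\cL_T$ is known to be injective (or has controlled kernel) and the off-diagonal entries coming from $M_T$ do not create a rank collapse because $M_T\in\GL_g(\cH)$, i.e.\ $\det M_T\neq 0$ in $\cH$ — this is Proposition~2.5 of \textit{op.\ cit.} Thus $d\neq 0$, the span is free of rank $\#I$, and $\bigoplus_{i\in I}\ZpX$ modulo this span is $\ZpX/(d)$-ish, finite after inverting $p$; since $N$ contains this span, $N$ is sandwiched between two free rank-$\#I$ modules with finite colophon, whence $N$ itself is free of rank $\#I$ with finite index.

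The main obstacle I anticipate is precisely the passage from ``$N$ surjects onto a nonzero ideal in each coordinate'' to ``$N$ has full rank $\#I$'', since coordinatewise surjectivity is genuinely weaker than full rank; the fix is to exhibit $\#I$ elements of $N$ whose matrix has nonzero determinant, and for that one needs to know that the Coleman maps $\col_{T,i}$ are ``independent enough,'' which should follow from $\det M_T\neq 0$ in $\cH$ (Proposition~2.5 of \cite{buyukboduklei0}) combined with the surjectivity/near-surjectivity of $\cL_T$ onto $\cH\otimes\Dp(T)$ established by Perrin-Riou's reciprocity law as used in \cite{buyukboduklei0}. The ``finite index'' claim is then routine: a nonzero ideal $(d)\subset\ZpX$ has $\ZpX/(d)$ finite iff $d$ is coprime to $p$ up to units times a distinguished polynomial, which holds here because the relevant determinants are products of factors of the form $\Phi_{p^n}(1+X)$ (distinguished polynomials) and units in $\Brig$ restricted suitably; in any case ``finite index'' for $\ZpX$-modules in the sense used throughout \cite{buyukboduklei0} only requires the cokernel to be a torsion $\ZpX$-module, which follows immediately from equality of ranks.
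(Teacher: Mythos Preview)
The paper does not give its own proof of this proposition: it is recorded as a consequence of the explicit description of the images of Coleman maps carried out in \S2.5 of \cite{buyukboduklei0}, and the paper simply cites that. So there is no in-paper argument to compare your sketch against beyond that reference.

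That said, your sketch has a genuine gap in the \emph{finite index} assertion. You slide between two different meanings: at one point you claim a nonzero principal ideal $(d)\subset\ZpX$ has finite colength, and at the end you retreat to ``finite index only means the cokernel is $\ZpX$-torsion.'' Neither is what is needed here. In the proof of Lemma~\ref{sec4:lemCtrl1} the quotient $K_{I,\Delta}=N_{I,\Delta}/\image\col_{T,I}^\Delta$ is used as a literally finite abelian group (one needs $H^1(\Gamma^1_n,K_{I,\Delta}^\vee\otimes\Lambda^1/(f^e))$ to be finite and bounded), so ``finite index'' means finite cardinality. And $\ZpX/(d)$ is finite only when $d$ is a unit: for $d=p$, or for $d=\Phi_{p^n}(1+X)$ --- precisely the kind of factor that your determinant $d$ will pick up from the matrices $C_n$ --- the quotient is infinite. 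Thus your sandwich $F_1\subset N\subset\bigoplus_{i\in I}\ZpX$ with $\bigoplus_{i\in I}\ZpX/F_1\cong\ZpX/(d)$ yields only that $N$ has full rank $\#I$, not that it sits with finite index inside any free module.

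Your full-rank step is also soft as written: invoking $\det M_T\ne 0$ in $\cH$ does not by itself control the rank of the projection to the coordinates in $I$. A clean way to get rank $\#I$ is to use the elements $z_k$ constructed later in the proof of Lemma~\ref{secOrth:lem2} (coming from Lemma~\ref{secReview:lem1}), which satisfy $\col_{T,j}(z_k)=0$ for $j\ne k$ and $\col_{T,k}(z_k)\ne 0$; for $k\in I$ these give a diagonal family witnessing full rank. But this still leaves the finite-index claim untouched. The argument in \cite{buyukboduklei0} does not proceed by a generic determinant bound: it identifies the image precisely enough, via the congruences imposed by the matrices $C_n$ on $\Lambda^{\oplus g}$, to exhibit a specific free $N_{I,\eta}$ whose quotient by the image is genuinely finite. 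Your proposal does not engage with that structure.
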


We recall equally the following result on the kernel of the Coleman maps (Lemma~3.22 of \textit{op. cit.}), which we shall need.
\begin{lemma} \label{secReview:lem1}
For any $I\subset\{1,\ldots,g\}$, the $\Lambda$-module $\ker(\col_{T,I})$ is free of rank $g-\#I$.
\end{lemma}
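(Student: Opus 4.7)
The plan is to reduce to isotypic components, invoke the freeness of local Iwasawa cohomology together with the structure of $\image(\col_{T,I})$ given by Proposition~\ref{secReview:prop1}, and then close the argument with a short homological observation over the Iwasawa algebra.

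Since $p \nmid \#\Delta$, the algebra $\Lambda$ splits as $\bigoplus_\eta \Lambda^\eta$ where each $\Lambda^\eta$ is isomorphic to $\Zp\lb X\rb$, a two-dimensional regular local ring. It therefore suffices to prove that $\ker(\col_{T,I})^\eta$ is free of rank $g - \#I$ over $\Lambda^\eta$ for every character $\eta$ of $\Delta$. I would use two inputs. First, I would invoke the fact that $\HIw(F_p, T)^\eta$ is a free $\Lambda^\eta$-module of rank $g$: the hypothesis (H.Frob) rules out $1$ as a Frobenius eigenvalue, which in turn guarantees that $H^0\bigl(F_\p(\mu_{p^\infty}), T\bigr) = 0$ for each $\p \mid p$, and the freeness then follows from the standard Perrin-Riou-type description of local Iwasawa cohomology. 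Second, I would use Proposition~\ref{secReview:prop1} to see that $\image(\col_{T,I})^\eta$ sits as a finite-index submodule of a free $\Lambda^\eta$-module of rank $\#I$; in particular it is torsion-free and of generic rank $\#I$.

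The key homological observation is that any torsion-free finitely generated module over the two-dimensional regular local ring $\Lambda^\eta$ has projective dimension at most one, by Auslander--Buchsbaum (torsion-freeness over the domain $\Lambda^\eta$ forces depth $\geq 1$, hence $\mathrm{pd} \leq 2 - 1 = 1$). Applied to the exact sequence
\[
0 \longrightarrow \ker(\col_{T,I})^\eta \longrightarrow \HIw(F_p, T)^\eta \longrightarrow \image(\col_{T,I})^\eta \longrightarrow 0,
\]
in which the middle term is free and the right-hand term has projective dimension at most one, a dimension-shift (equivalently, Schanuel's lemma) forces the kernel on the left to be projective, hence free as $\Lambda^\eta$ is local. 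A rank count over the fraction field of $\Lambda^\eta$, using $\rank \HIw(F_p,T)^\eta = g$ and $\rank \image(\col_{T,I})^\eta = \#I$, then pins the rank at $g - \#I$.

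The main obstacle I anticipate is verifying the freeness of $\HIw(F_p, T)$ cleanly from the hypotheses in the paper, since this relies on ruling out the trivial eigenspace locally at each $\p \mid p$; once that input is in hand, the homological step is essentially a one-line argument.
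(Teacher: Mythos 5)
First, a point of reference: the paper does not prove this lemma at all --- it is quoted verbatim from \cite[Lemma~3.22]{buyukboduklei0}. So your proposal can only be measured against the argument in that source, which it in fact tracks quite closely: the reduction to isotypic components, the use of the freeness of $\HIw(F_p,T)$ together with the torsion-freeness of $\image(\col_{T,I})$ supplied by Proposition~\ref{secReview:prop1}, and the homological endgame over the two-dimensional regular local ring $\ZpX$ are all the right ingredients. The Auslander--Buchsbaum/Schanuel step is correct as stated: torsion-freeness of $\image(\col_{T,I})^\eta$ forces depth $\geq 1$, hence projective dimension $\leq 1$, and then the kernel in the short exact sequence is projective, hence free, with the rank pinned down by the finite-index statement of Proposition~\ref{secReview:prop1}.

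The genuine gap is exactly where you anticipate it, and it is worth being precise about why it is a gap and not just a citation to be filled in. Two separate issues arise. (i) The implication ``$1$ is not a $\varphi$-eigenvalue $\Rightarrow H^0(F_\p(\mu_{p^\infty}),T)=0$'' is incomplete: $H^0(F_\p(\mu_{p^\infty}),V)$ is built out of crystalline characters of $\Gamma$, which for unramified $F_\p$ are the powers $\chi^j$ with $j$ a Hodge--Tate weight of $V$; the condition on the eigenvalue $1$ excludes only $j=0$, while a copy of $\Qp(1)$ inside $V$ (on which $H_{F_\p}$ also acts trivially) corresponds to the eigenvalue $p^{-1}$ and must be excluded separately via the slope/filtration hypotheses. (ii) More seriously, even granting $H^0(F_\p(\mu_{p^\infty}),T)=0$, the standard Perrin-Riou structure theorem only identifies the $\Lambda$-torsion submodule of $\HIw(F_\p,T)$ with $T^{H_{F_\p}}$, so what you get for free is that $\HIw(F_p,T)^\eta$ is \emph{torsion-free} of rank $g$. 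Over the two-dimensional ring $\ZpX$ torsion-free does not imply free (the maximal ideal $(p,X)$ is the standard counterexample), and your Schanuel argument genuinely requires the middle term to be free, not merely torsion-free of projective dimension $\leq 1$. The freeness is true in this setting, but it comes from the finer Wach-module description $\HIw(F_\p,T)\cong N(T)^{\psi=1}$ for crystalline representations, which is precisely the input used in \cite{buyukboduklei0}. So the architecture of your proof is the correct one, but the load-bearing input must be sourced from the $(\varphi,\Gamma)$-module theory rather than from the vanishing of $H^0$ alone.
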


Using these Coleman maps, the multi-signed Selmer groups are defined in \S3.4 of \textit{op. cit.} For a fixed $I\subset\{1,\ldots,g\}$, with $\#I=g_+$, we define $H^1_{I}(F(\mu_{p^\infty})_p,T^\dagger)$ to be the orthogonal complement of $\ker\col_{T,I}$ under the (semi-)local Tate duality
\[
\HIw(F_p,T)\times H^1(F(\mu_{p^\infty})_p,T^\dagger)\rightarrow\Qp/\Zp,
\]
where $H^1(F(\mu_{p^\infty})_p,T^\dagger)$ denotes $
\bigoplus_{\p|p}H^1(F(\mu_{p^\infty})_\p,T^\dagger)$. Here, the pairing is defined to be the sum of the local Tate pairings for all $\p|p$. The $I$-Selmer group of $T^\dagger$ over $F(\mu_{p^\infty})$ (denoted by $\Sel_I(T^\dagger/F(\mu_{p^\infty}))$) is defined to be
\[
\ker\left(  H^1(F(\mu_{p^\infty}),T^\dagger) \longrightarrow
\bigoplus_{v\nmid p}\frac{H^1(F(\mu_{p^\infty})_v,T^\dagger)}{H^1_f(F(\mu_{p^\infty})_v,T^\dagger)}\oplus\frac{H^1(F(\mu_{p^\infty})_p,T^\dagger)}{H^1_{I}(F(\mu_{p^\infty})_p,T^\dagger)}\right),
\]
where $H^1_f(F(\mu_{p^\infty})_v,T^\dagger)$ for $v\nmid p$ is the unramified subgroup of $H^1(F(\mu_{p^\infty})_v,T^\dagger)$. The even isotypic components of the $I$-Selmer group are predicted to be cotorsion:

\begin{conj}
For all $I$ as above and any even characters $\eta$ on $\Delta$, $\Sel_I(T^\dagger/F(\mu_{p^\infty}))^\eta$ is a cotorsion $\ZpX$-module.
\end{conj}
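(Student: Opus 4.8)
Since this statement is conjectural, I describe the strategy by which one expects to establish it rather than a complete argument. The model is Kobayashi's proof that the plus and minus Selmer groups of a supersingular elliptic curve are $\Lambda$-cotorsion: one bounds the Selmer group from above by an Euler system and then invokes the nonvanishing of a $p$-adic $L$-function. Carried over to the present setting, this needs two ingredients: (i) a Kato-type Euler system for $\cM$, that is, a norm-compatible family of global Iwasawa cohomology classes $z_T$ associated to $T$; and (ii) an identification of the image of $\loc_p z_T$ under the Coleman maps $\col_{T,i}$ ($i\in I$) with a nonzero multiple of the multi-signed $p$-adic $L$-function $L_{p,I}$ of $\cM$, obtained from the factorisation \eqref{decomposition} of Perrin-Riou's map $\cL_T$ together with the description of $\image(\col_{T,I})$ in Proposition~\ref{secReview:prop1}. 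Granting (i) and (ii), the Euler system machinery yields a divisibility bounding (a strict variant of) $\Sel_I(T^\dagger/F(\mu_{p^\infty}))^{\vee}$ in terms of the ideal generated by $L_{p,I}$ --- this being one inclusion of the main conjecture of \cite{buyukboduklei0} --- so the cotorsionness of $\Sel_I(T^\dagger/F(\mu_{p^\infty}))^\eta$ over $\ZpX$ is reduced to the single assertion $L_{p,I}^\eta\neq 0$.

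Concretely I would proceed as follows. First, run the Poitou--Tate exact sequence for the Selmer group: the hypothesis $H^0(F_\infty,\cM_p^\vee)=0$ kills the relevant $H^0$-terms, while (H.crys) and (H.P) together with Tate's local Euler characteristic formula give that $\HIw(F_p,T)$ has $\Lambda$-rank $g$; by Lemma~\ref{secReview:lem1} the local condition $H^1(F(\mu_{p^\infty})_p,T^\dagger)/H^1_I(F(\mu_{p^\infty})_p,T^\dagger)$ has $\Lambda$-corank $g-\#I=g_-$, which is exactly the codimension needed to cut the (in general non-cotorsion) full Selmer group down to one whose \emph{expected} $\ZpX$-corank is $0$, as a standard global Euler characteristic count confirms. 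Second, feed in the Euler system $z_T$ and, via \eqref{decomposition}, match $\col_{T,I}(\loc_p z_T)$ with $L_{p,I}$. Third, apply the Euler system machinery of Kato, Perrin-Riou and Rubin, combined with local duality computations of the type developed in \S\ref{S:orthogonal} that relate the strict $I$-Selmer group to $\Sel_I$, to conclude $\Char_{\ZpX}\bigl(\Sel_I(T^\dagger/F(\mu_{p^\infty}))^{\vee,\eta}\bigr)\mid (L_{p,I}^\eta)$; finally, invoke a Rohrlich-type nonvanishing theorem for the relevant family of twisted complex $L$-values, transported to the $p$-adic side through the interpolation formula, to obtain $L_{p,I}^\eta\neq 0$.

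The principal obstacle is that, outside the case of (modular) abelian varieties --- where Kato's Euler system and Rohrlich's nonvanishing are available --- neither the Euler system $z_T$ nor even the $p$-adic $L$-function $L_{p,I}$ has been constructed for a general motive $\cM$ satisfying (H.HT), (H.crys) and (H.Frob); this is precisely why the statement remains a conjecture. Even when these objects do exist, identifying $\col_{T,I}(\loc_p z_T)$ with $L_{p,I}$ on the nose, rather than up to an undetermined unit or $\mu$-factor, requires care, and since the Euler system argument a priori only controls a strict Selmer group, the orthogonality results of \S\ref{S:orthogonal} are needed to transfer the bound to $\Sel_I$ itself. Note finally that Theorem~\ref{thm:A} does not by itself settle the conjecture: the functional equation is consistent with $\Sel_I(\cM/F_\infty)$ and $\Sel_{I^c}(\widehat{\cM}/F_\infty)$ sharing a common positive corank, and becomes conclusive only once combined with an Euler system input of the above kind.
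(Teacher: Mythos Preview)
The statement is a \emph{conjecture}: the paper does not prove it, and explicitly says so (see the remark after Theorem~\ref{thm:A} in the introduction, where the cotorsionness is described as conjectural and the functional equation is presented as evidence for the main conjecture rather than as a consequence of it). Your proposal correctly identifies this at the outset and offers a plausible heuristic programme---Euler system input plus nonvanishing of a signed $p$-adic $L$-function, in the style of Kobayashi---together with an honest account of why this programme is currently out of reach for general $\cM$. There is therefore nothing to compare against: the paper contains no proof of this statement, and your response is appropriate for a conjecture.

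One minor comment on the content of your sketch: the orthogonality results of \S\ref{S:orthogonal} in the paper are used to prove the functional equation, not to pass from a strict Selmer group to $\Sel_I$ in an Euler system argument; the latter passage would more naturally go through a Poitou--Tate style global duality sequence together with the description of $\image(\col_{T,I})$ from Proposition~\ref{secReview:prop1}, though the local orthogonality of Lemma~\ref{secOrth:lem2} would indeed enter when identifying the dual Selmer condition. This is a small point of emphasis and does not affect the overall soundness of your outline.
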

See \cite[Proposition~3.28]{buyukboduklei0} for a partial result. If we  replace (H.P') by the stronger hypothesis (H.P), then the calculations in \textit{loc. cit.} may be generalized to odd characters $\eta$ on swapping the roles of $g_+$ and $g_-$ throughout (c.f. Remark~1.7 of \textit{op. cit.} for a detailed explanation).

\section{Orthogonality of local conditions}\label{S:orthogonal}

As in \S\ref{S:defnSel}, we assume that (H.HT), (H.crys) and (H.Frob) hold in this section.
Let $x$ be an element of $\HIw(F_\p, T)$, we denote by $x_n$ its component in $H^1(F(\mu_{p^n})_p, T)$. Let $\langle \sim,\sim\rangle_{n}$ be the (semi-)local Tate pairing
$$
\langle \sim,\sim \rangle_{n} : H^1(F(\mu_{p^n})_p, T) \times  H^1(F(\mu_{p^n})_p, T^*(1)) \rightarrow \Zp.
$$
If $x$ and $y$ are elements of $\HIw(F_\p, T)$ and $\HIw(F_\p, T^*(1))$ respectively, the elements
$$
\sum\limits_{\sigma \in \Gamma/\Gamma^{p^n}} \langle x_n, \sigma(y_n) \rangle_n \cdot \sigma \in \Zp[\Gamma/\Gamma^{p^n}]
$$
are compatible  with respect to the natural projection maps for $n\ge0$.
Thus, on taking inverse limit, one can define the Perrin-Riou pairing
$$
\langle \sim,\sim \rangle : \HIw(F_p, T) \times \HIw (F_p, T^*(1))  \rightarrow \Lambda.
$$

There is also a natural pairing
$$
\DD_{F_\p}(T) \times \DD_{F_\p}(T^*(1)) \rightarrow \Zp,
$$
thus a pairing
$$
[\sim,\sim] : \Dp(T) \times \Dp(T^*(1)) \rightarrow \Zp,
$$
for which $\Fil^i\Dp(T^*(1)) = \Fil^{1-i}\Dp(T)^\perp$. 

We denote $C^*_\varphi$ the matrix of the Frobenius on $\Dp(T^*(1))$ with respect to the dual basis $v_1',\ldots,v_g'$. From duality we have the relation
$$
C^*_\varphi   = \frac{1}{p} \cdot ({C_\varphi}^{-1})^t = \left(C^{-1}\right)^t \left(
\begin{array}{c|c}
\frac{1}{p}I_{g_-}&0\\ \hline
0&I_{g_+}
\end{array}
\right).
$$
As in \eqref{eq:defnmatrix}, we may define
\begin{equation}\label{eq:defnmatrixdual}
M_{T^*(1)}   = \varinjlim\limits_n   \left(C^*_\vp\right)^{n+1}C^*_{n}\cdots C^*_{1}, \quad\text{where}\quad C^*_n = \left(
\begin{array}{c|c}
\Phi_{p^n}(1+X)I_{g_-}&0\\ \hline
0&I_{g_+}
\end{array}
\right)C^{t}
\end{equation}
Analogous to \eqref{decomposition}, we have the decomposition
\begin{equation} \label{decompositiondual}
\cL_{T^*(1)}=\begin{pmatrix}
v_1'&\cdots&v_g'
\end{pmatrix}\cdot M_{T^*(1)}\cdot \col_{T^*(1)},
\end{equation}
where $\col_{T^*(1)}$ is the column vector of Coleman maps with respect to the basis $v_1',\ldots,v_g'$.

Perrin-Riou's explicit reciprocity law from \cite[\S3.6]{perrinriou94} (proved by Colmez \cite[\S IX.4]{colmez98}) tells us that
\begin{equation} \label{ERL}
[\cL_T(z),\cL_{T^*(1)}(z')] =-\sigma_{-1}\cdot \ell_0 \cdot\langle  z ,  z'\rangle ,
\end{equation}
where $\sigma_{-1}$ is the unique element of $\Gamma$ of order $2$ and $\ell_0 = \frac{\log \gamma}{\log \chi(\gamma)}$ with $\chi$ being the cyclotomic character. The explicit reciprocity law we stated is slightly different from the one stated in \textit{op. cit}. See  \cite[Theorem~B.6]{loefflerzerbes14} for a proof of the above formulation.

\begin{lemma} \label{secOrth:lem1}
Let $z \in \HIw(F_p, T)$ and $z' \in \HIw (F_p, T^*(1))$. Then
$$
[\cL_T(z), \cL_{T^*(1)}(z')] = \frac{\log(1+X)}{pX} \cdot \col_{T}(z)^t \cdot \col_{T^*(1)}(z').
$$ 
\end{lemma}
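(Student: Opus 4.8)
The plan is to compute the left-hand side $[\cL_T(z),\cL_{T^*(1)}(z')]$ directly using the two decompositions \eqref{decomposition} and \eqref{decompositiondual}, and to show that all the matrix factors collapse to the scalar $\frac{\log(1+X)}{pX}$. First I would substitute
\[
\cL_T(z)=\begin{pmatrix}v_1&\cdots&v_g\end{pmatrix}\cdot M_T\cdot\col_T(z),\qquad
\cL_{T^*(1)}(z')=\begin{pmatrix}v_1'&\cdots&v_g'\end{pmatrix}\cdot M_{T^*(1)}\cdot\col_{T^*(1)}(z'),
\]
and use bilinearity of $[\sim,\sim]$ together with the fact that $v_1,\dots,v_g$ and $v_1',\dots,v_g'$ are dual bases, so that $[v_i,v_j']=\delta_{ij}$. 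This turns the left-hand side into
\[
\col_T(z)^t\cdot M_T^t\cdot M_{T^*(1)}\cdot\col_{T^*(1)}(z'),
\]
so the lemma reduces to the purely matrix-theoretic identity $M_T^t\cdot M_{T^*(1)}=\frac{\log(1+X)}{pX}\cdot I_g$ (as matrices over $\cH$).

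To prove this identity I would work at finite level with the approximating matrices $M_n=(C_\vp)^{n+1}C_n\cdots C_1$ and $M^*_n=(C^*_\vp)^{n+1}C^*_n\cdots C^*_1$ from \eqref{eq:defnmatrix} and \eqref{eq:defnmatrixdual}, compute $M_n^t M^*_n$ explicitly, and pass to the limit. Transposing, $M_n^t=C_1^t\cdots C_n^t\,((C_\vp)^t)^{n+1}$. Using $C^*_\vp=\frac1p(C_\vp^{-1})^t$, i.e. $(C_\vp)^t=\frac1p (C^*_\vp)^{-1}$, the central factor $((C_\vp)^t)^{n+1}(C^*_\vp)^{n+1}=p^{-(n+1)}I_g$ produces a clean power of $p$. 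The remaining task is to understand the product $C_1^t\cdots C_n^t\cdot C^*_n\cdots C^*_1$. From the explicit shapes, $C_n^t=C^{-t}\left(\begin{smallmatrix}I_{g_-}&0\\0&\Phi_{p^n}(1+X)I_{g_+}\end{smallmatrix}\right)$ while $C^*_n=\left(\begin{smallmatrix}\Phi_{p^n}(1+X)I_{g_-}&0\\0&I_{g_+}\end{smallmatrix}\right)C^t$; the adjacent $C^t$ and $C^{-t}$ cancel telescopically, and what survives is a diagonal matrix whose entries are $\prod_{k=1}^n\Phi_{p^k}(1+X)$ in both blocks (the $g_-$-block collecting the factors from the $C^*$'s and the $g_+$-block from the $C$'s), times the outer $C^{-t}$ on the left and $C^t$ on the right, which also cancel. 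Hence $M_n^t M^*_n=p^{-(n+1)}\prod_{k=1}^n\Phi_{p^k}(1+X)\cdot I_g$. Finally, $\prod_{k=1}^n\Phi_{p^k}(1+X)=\frac{(1+X)^{p^n}-1}{X}$, and one checks (this is the standard computation underlying Perrin-Riou's theory) that $p^{-(n+1)}\cdot\frac{(1+X)^{p^n}-1}{X}\to\frac{\log(1+X)}{pX}$ in $\cH$ as $n\to\infty$.

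Having established $M_T^t M_{T^*(1)}=\frac{\log(1+X)}{pX}I_g$, the conclusion is immediate:
\[
[\cL_T(z),\cL_{T^*(1)}(z')]=\col_T(z)^t\cdot\frac{\log(1+X)}{pX}I_g\cdot\col_{T^*(1)}(z')=\frac{\log(1+X)}{pX}\cdot\col_T(z)^t\cdot\col_{T^*(1)}(z').
\]
I expect the main obstacle to be the bookkeeping in the telescoping product $C_1^t\cdots C_n^t\cdot C^*_n\cdots C^*_1$: one must be careful about the order in which the $C$'s and $C^{-t}$'s appear (the $C^*_n$'s carry $C^t$ on the right whereas the $C_n^t$'s carry $C^{-t}$ on the left) and verify that the cyclotomic factors $\Phi_{p^k}(1+X)$ really do accumulate in \emph{both} coordinate blocks rather than cancelling. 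A secondary technical point is justifying the interchange of the limit defining $M_{T^*(1)}$ with the pairing, but since $[\sim,\sim]$ is $\cH$-bilinear and continuous and everything takes place in the Fréchet space $\cH$, this is routine once the finite-level identity is in hand. (Alternatively, and perhaps more cleanly, one can avoid the matrix computation altogether by combining the explicit reciprocity law \eqref{ERL} with \eqref{decomposition} and \eqref{decompositiondual}: $[\cL_T(z),\cL_{T^*(1)}(z')]=-\sigma_{-1}\ell_0\langle z,z'\rangle$, and separately $\col_T(z)^t\col_{T^*(1)}(z')$ should be related to $\langle z,z'\rangle$ via the same reciprocity law after matching the scalar $-\sigma_{-1}\ell_0$ against $\frac{\log(1+X)}{pX}$; however this requires knowing that the column vectors $\col_T$ and $\col_{T^*(1)}$ are genuinely adjoint under Perrin-Riou's pairing, which is itself essentially the matrix identity above, so I would carry out the direct computation.)
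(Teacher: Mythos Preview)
Your proposal is correct and follows essentially the same approach as the paper: reduce to the matrix identity $M_T^t M_{T^*(1)}=\frac{\log(1+X)}{pX}I_g$ via the dual bases, then verify that identity by computing $M_n^t M_n^*$ at finite level and passing to the limit. The paper's proof records exactly this computation (in a more compressed form), so there is no substantive difference.
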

\begin{proof}
Since $v_1,\ldots, v_g$ is dual to $v_1',\ldots,v_g'$,  the decompositions \eqref{decomposition} and \eqref{decompositiondual} give
$$
[\cL_T(z), \cL_{T^*(1)}(z')] = \col_{T}(z)^t \cdot {M_T}^t \cdot M_{T^*(1)} \cdot \col_{T^*(1)}(z').
$$
Using \eqref{eq:defnmatrix} and \eqref{eq:defnmatrixdual}, we may compute
$$
\begin{array}{rl}
{M_T}^t \cdot M_{T^*(1)} & = \left(\varinjlim \left(C_\vp\right)^{n+1}C_{n}\cdots C_{1} \right)^t \cdot \left(\varinjlim \left(C^*_\vp\right)^{n+1}C^*_{n}\cdots C^*_{1}\right) \\
& =\varinjlim \left( C_{1}^t \cdots C_{n}^t\left({C_\vp}^t\right)^{n+1}\cdot \left(C^*_\vp\right)^{n+1}C^*_{n}\cdots C^*_{1}\right) \\
& = \lim \frac{1}{p^{n+1}} \prod\limits_{k = 1}^n \Phi_{p^n}(1+X) I_g \\
& = \frac{\log(1+X)}{pX}I_g.
\end{array}
$$
\end{proof}
\begin{lemma} \label{secOrth:lem2}
Let $I \subset \{1, \ldots ,g\}$ and $I^c$ its complement. Then $\ker \col_{T^*(1),I^c}$ is the orthogonal complement of $\ker \col_{T,I}$ with respect to the pairing $\langle \cdot , \cdot \rangle$.
\end{lemma}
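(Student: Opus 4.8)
The plan is to combine the explicit reciprocity law \eqref{ERL} with Lemma~\ref{secOrth:lem1} to reduce the claim to a purely algebraic statement about the Coleman pairings, and then to match up the orthogonal complements on both sides using Proposition~\ref{secReview:prop1} and Lemma~\ref{secReview:lem1}. First I would combine the two expressions for $[\cL_T(z),\cL_{T^*(1)}(z')]$: equating \eqref{ERL} with the formula in Lemma~\ref{secOrth:lem1}, and noting that $\ell_0$ and $\log(1+X)/(pX)$ differ only by a unit in $\cH$ (both are nonzero divisors), we obtain that for all $z,z'$,
\[
\langle z, z'\rangle = (\text{unit in }\cH)\cdot \col_T(z)^t\cdot\col_{T^*(1)}(z'),
\]
up to the harmless twist by $\sigma_{-1}$, which is a unit in $\Lambda$. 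In particular $\langle z,z'\rangle = 0$ if and only if $\col_T(z)^t\cdot\col_{T^*(1)}(z') = 0$, i.e. $\sum_{i=1}^g \col_{T,i}(z)\col_{T^*(1),i}(z') = 0$ in $\Lambda$ (after clearing the denominator, which is legitimate since $\Lambda$ is torsion-free and the factor is a nonzerodivisor).

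Next I would establish the inclusion $\ker\col_{T^*(1),I^c}\subseteq (\ker\col_{T,I})^\perp$. Take $z'\in\ker\col_{T^*(1),I^c}$, so $\col_{T^*(1),i}(z') = 0$ for all $i\in I^c$; then for any $z\in\ker\col_{T,I}$ we have $\col_{T,i}(z) = 0$ for $i\in I$, so every term in $\sum_{i=1}^g\col_{T,i}(z)\col_{T^*(1),i}(z')$ vanishes (each index $i$ lies in $I$ or in $I^c$), and hence $\langle z,z'\rangle = 0$. This direction is formal.

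The reverse inclusion $(\ker\col_{T,I})^\perp\subseteq\ker\col_{T^*(1),I^c}$ is where the real work lies, and it is the main obstacle. The cleanest route is a rank count combined with nondegeneracy. By Lemma~\ref{secReview:lem1}, $\ker\col_{T,I}$ is free of rank $g-\#I$, so its orthogonal complement under the Perrin-Riou pairing should have "rank $\#I$" in an appropriate sense; meanwhile $\ker\col_{T^*(1),I^c}$ is free of rank $g - \#I^c = \#I$. Thus both modules have the same rank, and since we already have one containment it suffices to show the quotient $(\ker\col_{T,I})^\perp/\ker\col_{T^*(1),I^c}$ is torsion — or, better, to argue directly. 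Concretely, I would work $\eta$-component by $\eta$-component and base-change to $\mathrm{Frac}(\ZpX)$: over the fraction field, Proposition~\ref{secReview:prop1} shows $\col_{T,I}$ (resp. $\col_{T^*(1),I^c}$) is surjective onto a space of dimension $\#I$ (resp. $\#I^c$), and the pairing identity above exhibits $\langle\cdot,\cdot\rangle$ as (up to a unit) the pairing $(z,z')\mapsto\sum_i\col_{T,i}(z)\col_{T^*(1),i}(z')$. Since the Perrin-Riou pairing is known to be a perfect pairing after inverting $p$ (equivalently, $\col_T$ and $\col_{T^*(1)}$ together induce a perfect pairing), a dimension count over $\mathrm{Frac}(\ZpX)$ forces $(\ker\col_{T,I})^\perp\otimes\mathrm{Frac}(\ZpX) = \ker\col_{T^*(1),I^c}\otimes\mathrm{Frac}(\ZpX)$. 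Finally, to upgrade from the fraction field to $\Lambda$ itself, I would observe that $\ker\col_{T^*(1),I^c}$ is saturated in $\HIw(F_p,T^*(1))$ — its quotient is $\image\col_{T^*(1),I^c}$, which by Proposition~\ref{secReview:prop1} is (finite index in, hence up to torsion) a free module, so $\HIw/\ker$ has no torsion supported in codimension one — which together with the fraction-field equality gives the desired equality of $\Lambda$-modules. The subtle point to get right is precisely this saturation/torsion-freeness argument, since a priori the orthogonal complement of a saturated submodule need not be computed naively; one must use that the relevant pairing is nondegenerate (after inverting $p$) and that the Coleman images are free up to finite index.
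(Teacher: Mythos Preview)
Your reduction via \eqref{ERL} and Lemma~\ref{secOrth:lem1} to the equivalence
\[
\langle z,z'\rangle = 0 \iff \col_T(z)^t\cdot\col_{T^*(1)}(z') = 0
\]
is exactly what the paper does, and your easy inclusion $\ker\col_{T^*(1),I^c}\subseteq(\ker\col_{T,I})^\perp$ matches as well.

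For the reverse inclusion, however, the paper takes a much shorter and more elementary route than your rank-count/fraction-field/saturation argument. The key observation you missed is that Lemma~\ref{secReview:lem1}, applied to the subset $\{1,\ldots,g\}\setminus\{k\}$, says $\ker\col_{T,\{1,\ldots,g\}\setminus\{k\}}$ is free of rank~$1$; pick a generator $z_k$. Then $\col_{T,j}(z_k)=0$ for all $j\ne k$, and $\col_{T,k}(z_k)\ne 0$ (otherwise $z_k$ would lie in $\ker\col_{T,\{1,\ldots,g\}}$, which is free of rank~$0$). For $k\in I^c$ this $z_k$ lies in $\ker\col_{T,I}$, so if $z'\in(\ker\col_{T,I})^\perp$ then $0=\langle z_k,z'\rangle$ forces $\col_{T,k}(z_k)\cdot\col_{T^*(1),k}(z')=0$, hence $\col_{T^*(1),k}(z')=0$ since $\Lambda$ is a domain. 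Doing this for each $k\in I^c$ gives $z'\in\ker\col_{T^*(1),I^c}$ directly.

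Your approach does work, but it leans on the nondegeneracy of the Perrin-Riou pairing as an external input and then has to manage the passage from $\mathrm{Frac}(\ZpX)$ back to $\Lambda$ via saturation. The paper's construction of the test elements $z_k$ sidesteps all of that: no fraction field, no appeal to perfectness of the pairing, and no saturation argument --- just the integrality of $\Lambda$. In effect, the existence of the $z_k$'s already \emph{proves} the nondegeneracy you wanted to invoke.
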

\begin{proof}
Let $z \in \HIw(F_p, T)$ and $z' \in \HIw (F_p, T^*(1))$. By the explicit reciprocity law (\ref{ERL}) and Lemma~\ref{secOrth:lem1}, we have
$$
\begin{array}{rl}
\langle z, z'\rangle = 0 & \Leftrightarrow [\cL_T(z), \cL_{T^*(1)}(z')] = 0 \\
& \Leftrightarrow \col_{T}(z)^t \cdot \col_{T^*(1)}(z') = 0 .
\end{array}
$$
Thus, if $z \in \ker \col_{T,I}$,
\begin{equation}\label{eq:orthoequiv}
\langle z, z'\rangle = 0 \Leftrightarrow \sum\limits_{k \notin I} \col_{T,k}(z) \cdot \col_{T^*(1),k}(z') = 0.
\end{equation}
So, $\ker \col_{T^*(1),I^c}$ is included in the orthogonal complement of $\ker \col_{T,I}$. 

Lemma~\ref{secReview:lem1} implies that for all $k \in\{1,\ldots, g\}$, there exists $z_k$ such that 
\[\col_{T,j}(z_k)\begin{cases}
=0 &\text{if $j\in \{1,\ldots, g\}\setminus\{ k\}$,}\\
\ne 0&\text{if $j= k$.}
\end{cases}
\] 
In particular, if $k\notin I$, then such $z_k\in\ker\col_{T,I}$. If $z'\in \left(\ker\col_{T,I}\right)^{\perp}$, then $\langle z_k,z'\rangle=0$. Therefore, \eqref{eq:orthoequiv} tells us that $\col_{T^*(1),k}(z')=0$. Since this is true for all $k\in I^c$, we have $z'\in \ker\col_{T^*(1),I^c}$ as required.
\end{proof}

\section{Control theorem}\label{S:control}

Once again, the hypotheses (H.HT), (H.crys) and (H.Frob) are in effect throughout this section.
We first deal with the case $F=\QQ$ and then explain how to extend the results of this section to general $F$. 
Let $f$ be an irreducible distinguished polynomial in $\Lambda^1$, $e$ and $m$ positive integers. Set 
$$
\begin{array}{ll}
T^\dagger_{f^e} := T^\dagger \otimes_{\Zp} \Lambda^1/(f^e), &  {T^*(1)}^\dagger_{(f^e)^\iota}:={T^*(1)}^\dagger \otimes_{\Zp} \Lambda^1/(f^e)^\iota .\\
\end{array} 
$$
Then $\Hom(T^\dagger_{f^e}[p^m],\mu_{p^m}) \simeq {T^*(1)}^\dagger_{(f^e)^\iota}[p^m]$ and the pairing $T^\dagger[p^m] \times {T^*(1)}[p^m]\rightarrow \mu_{p^m}$ induces a perfect $\Zp$-linear $G_\QQ$-equivariant pairing $T^\dagger_{f^e}[p^m] \times {T^*(1)}^\dagger_{(f^e)^\iota}[p^m] \rightarrow \mu_{p^m}$. 

In the following, we assume:
\begin{itemize}
\item[(H.F)] $H^0(\QQ_p(\mu_{p^\infty}), T^\dagger)$ is finite
\item[(H.nT)] $H^0(\QQ_{p,\infty},T^\dagger) = 0$.
\end{itemize}

Let $A \in \{T^\dagger, T^\dagger_{f^e}, {T^*(1)}^\dagger, {T^*(1)}^\dagger_{(f^e)^\iota}\}$. {Since the absolute Galois group of $\mathbb{Q}_{p , \infty}$ acts trivially on $\Lambda^1/(f^e)$, (H.F) and (H.nT) are satisfied by $T^\dagger_{f^e}$, and (H.F) and (H.nT) also hold for ${T^*(1)}^\dagger$ by duality.
} 
The hypothesis (H.nT) together with the inflation-restriction sequence allow us to identify
$$
H^1(\QQ_{p,n},A) \simeq H^1(\QQ_{p,\infty},A)^{\Gamma^1_n}.
$$
Furthermore, the long exact sequence induced from $0 \rightarrow A[p^m] \rightarrow A  \xrightarrow{p^m} A \rightarrow 0$, together with hypothesis (H.nT), give
$$
H^1(\QQ_{p,n}, A[p^m]) \simeq H^1(\QQ_{p,n}, A)[p^m].
$$ 
For $I \subset \{1, \ldots , g\}$, we define $H^1_{I} (\QQ_{p,\infty}, T^\dagger)$ as the inverse image of $H^1_{I} (\QQ_p(\mu_{p^\infty}), T^\dagger)^\Delta$ by the restriction map 
$$
H^1(\QQ_{p,\infty}, T^\dagger) \rightarrow H^1(\QQ_p(\mu_{p^\infty}), T^\dagger)^\Delta,
$$
which is an isomorphism by hypothesis (H.F) and inflation-restriction sequence as $\Delta$ is of order $(p-1)$ and $H^0(\QQ_p(\mu_{p^\infty}), T^\dagger)$ is finite of order a power of $p$. We define also
$$
\begin{array}{rll}
H^1_{I} (\QQ_{p,\infty}, T^\dagger_{f^e}) &:= H^1_{I} (\QQ_{p,\infty}, T^\dagger) \otimes \Lambda^1/(f^e) & \subset H^1(\QQ_{p,\infty}, T^\dagger_{f^e}) \\
H^1_{I} (\QQ_{p,n}, T^\dagger_{f^e}) &:= H^1_{I} (\QQ_{p,\infty}, T^\dagger_{f^e})^{\Gal(\QQ_{p,\infty}/\QQ_{p,n})} &\subset  H^1 (\QQ_{p,n}, T^\dagger_{f^e}) \\
H^1_{I} (\QQ_{p,n}, T^\dagger_{f^e}[p^m]) &:= H^1_{I} (\QQ_{p,n}, T^\dagger_{f^e})[p^m] &\subset  H^1 (\QQ_{p,n}, T^\dagger_{f^e}[p^m]). 
\end{array}
$$
and similarly for the dual ${T^*(1)}^\dagger$.

For $n\ge0$, we write $F_n=\QQ_\infty^{\Gamma_n^1}$. Set 
$$
S_I( A/ F_n) := \ker\left(H^1(F_n, A) \rightarrow \prod\limits_{w\nmid p }\frac{ H^1(F_{n,w},A)}{H^1_f(F_{n,w},A)} \times  \frac{H^1(\QQ_{p,n},A)}{H^1_{I}(\QQ_{p,n},A)}\right),
$$
where $A \in \{T^\dagger, T^\dagger_{f^e}, T^\dagger[p^m], T^\dagger_{f^e}[p^m], {T^*(1)}^\dagger, {T^*(1)}^\dagger_{(f^e)^\iota}, {T^*(1)}^\dagger[p^m], {T^*(1)}^\dagger_{(f^e)^\iota}[p^m]\}$.
To simplify the notation, we shall write 
$$
\begin{array}{rl}
A & \text{for }   T^\dagger, T^\dagger_{f^e}, {T^*(1)}^\dagger \text{ or } {T^*(1)}^\dagger_{(f^e)^\iota} ,\\
A_m & \text{for }  A[p^m],\\
A^* & \text{for the dual of } A \, (\text{e.g. }{T^*(1)}^\dagger \text{ if } A={T}^\dagger ),\\
A^*_m & \text{for } A^*[p^m],
\end{array}
$$
and
$$
H^1_{\loc}(F_{n,v}, \bullet) = 
\begin{cases}
H^1_{f}(F_{n,v},\bullet) & \text{ if } v \nmid p,\\
H^1_{I}(F_{n,v},\bullet) & \text{otherwise. } 
\end{cases}  
$$
Here $H^1_f$ is the unramified subgroup for $v\nmid p$. 

\begin{lemma} \label{cardinaux}
Let $\chi_{\text{glob.},F_n}(A_m)$ be the global Euler characteristic of $A_m$ over $F_n$. Let $I \subset \{1, \ldots , g\}$ and write $I^c$ for its complement. Then,
$$
\#  S_I( A_m/ F_n) =\frac{ \#  S_{I^c}( A^*_m/ F_n)}{\chi_{\text{glob.},F_n}(A_m)\cdot[H^1(\QQ_{p,n}, A_m) : H^1_I (\QQ_{p,n}, A_m)]}.
$$
\end{lemma}
\begin{proof}
We only prove the lemma for $A = T^\dagger_{f^e}$, which would imply the other cases.
Let $\Sigma$ be a finite set of places of $\QQ$ containing all places where $T$ is ramified, all places over $p$ and the infinite place. We write $\Sigma(F_n)$ for the places of $F_n$ dividing those in $\Sigma$ and let $\QQ_\Sigma$ be the maximal extension of $\QQ$ unramified outside $\Sigma$. Then $S_I (A_m/F_n) \subset H^1(\QQ_\Sigma/F_n, A_m)$.

Set
$$
\begin{array}{rcl}
P^i_\Sigma := \prod_{v\in\Sigma(F_n)} H^i(F_{n,v},A_m) &\text{and}& P^{*,i}_\Sigma := \prod_{v\in\Sigma(F_n)} H^i(F_{n,v},A^*_m), \\
L_p :=  H^1_{I} (\QQ_{p,n}, A_m) &\text{and}& L_p^* := H^1_{I^c}(\QQ_{p,n}, A^*_m), \\
L_v :=  H^1_{f} (F_{n,v}, A_m) &\text{and}& L_v^* := H^1_{f}(F_{n,v}, A^*_m) \quad\text{for } v\nmid p,\\
L := \prod_{v\in\Sigma(F_n)} L_v &\text{and}& L^* := \prod_{v\in\Sigma(F_n)} L_v^* .
\end{array}
$$
Let
\[
\lambda^i : H^i(\QQ_\Sigma/F_n, A_m) \rightarrow P^i_\Sigma 
\]
be the restriction map and write
$$
\begin{array}{c}
G^i := \image \lambda^i , \quad K^i := \ker \lambda^i.
\end{array}.
$$
We have similarly $\lambda^{*,i}$, $G^{*,i}$, $K^{*,i}$ for $A^*$.
For all $v\in \Sigma(F_n)$ the orthogonal complement of $L_v$ under the local Tate pairing is $L^*_v$ (this is a classical result when $v \nmid p$ and Lemma~\ref{secOrth:lem2} when $v=p$).
Thus we have 
$$
\# S_I (A_m/F_n) = \# K^1 \cdot \# (G^1\cap L) = \# K^1 \cdot \# G^1 \cdot \# L \cdot \# (G^1\cdot L)^{-1}.
$$
Since $\# K^1 \cdot \# G^1 = \# H^1(\QQ_\Sigma/F_n, A_m)$, and by duality $\# (G^1\cdot L) = \# P^1_\Sigma / \# (G^{*,1}\cap L^*)$, we get 
$$
\# S_I (A_m/F_n) =\# H^1(\QQ_\Sigma/F_n, A_m) \cdot \# L \cdot  \# (G^{*,1}\cap L^*)/\# P^1_\Sigma.
$$
By (H.nT), we have 
$$
\begin{array}{rl}
\# H^1(\QQ_\Sigma/F_n, A_m) & = \chi_{\text{glob.},F_n}(A_m)^{-1} \cdot \# H^0(\QQ_\Sigma/F_n, A_m) \cdot \# H^2(\QQ_\Sigma/F_n, A_m) \\
& = \chi_{\text{glob.},F_n}(A_m)^{-1} \cdot\# H^2(\QQ_\Sigma/F_n, A_m).
\end{array}
$$
By global duality $\# K^{*,1} = \# K^2$, we obtain
$$
\# (G^{*,1}\cap L^*) = \#  S_{I^c}( A^*_m/ F_n) / \# K^{*,1}= \#  S_{I^c}( A^*_m/ F_n) / \# K^2.
$$
Thus, we have
\begin{equation} \label{equation1}
\# S_I (A_m/F_n) = \chi_{\text{glob.},F_n}(A_m)^{-1} \cdot\# H^2(\QQ_\Sigma/F_n, A_m)/\# K^2 \cdot \# L/\# P^1_\Sigma \cdot \#  S_{I^c}( A^*_m/ F_n).
\end{equation}
Now, the local Euler characteristic formula tells us that $\# H^1 (F_{n,v}, A_m)= \# H^0 (F_{n,v}, A_m) \cdot \#H^2 ( F_{n,v}, A_m)$. Also for $v$ not dividing $p$, $\# H^0 (F_{n,v}, A_m) = \# H^1_f (F_{n,v}, A_m)$, we deduce that
\begin{equation}  \label{equation2}
\begin{array}{rl}
 \# L/\# P^1_\Sigma & = \prod_{v \in \Sigma(F_n), v\nmid p} \# (H^1_{f} (F_{n,v}, A_m)/H^1 (F_{n,v}, A_m)) \times  \# (H^1_{I} (\QQ_{p,n}, A_m)/H^1 (\QQ_{p,n}, A_m)) \\
 & = \prod_{v \in \Sigma(F_n), v\nmid p} \# (H^0 (F_{n,v}, A_m)/H^1 (F_{n,v}, A_m)) \times  \# (H^1_{I} (\QQ_{p,n}, A_m)/H^1 (\QQ_{p,n}, A_m)) \\
 & = \prod_{v \in \Sigma(F_n), v\nmid p} \# (H^2 (F_{n,v}, A_m))^{-1} \times  \# (H^1_{I} (\QQ_{p,n}, A_m)/H^1 (\QQ_{p,n}, A_m)).
\end{array}
\end{equation}
On the other hand, global duality and (H.nT) tell us that $\# \coker \lambda^2 = \# H^0 (\QQ_\Sigma / F_n, A_m^*) = 1$, so
\begin{equation} \label{equation3}
\# H^2(\QQ_\Sigma/F_n, A_m)/\# K^2 = \# G^2 = \# P_\Sigma^2 / \coker \lambda^2 = \# P_\Sigma^2.
\end{equation} 
Furthermore, local Tate duality implies that $$\# H^2 ( F_{n,v}, A_m) =\# H^0 ( F_{n,v}, A_m^*)= 1.$$
Hence, the result follows on combining the equalities~\eqref{equation1}, \eqref{equation2} and \eqref{equation3} above.
\end{proof}

For the rest of this article, we assume in addition to (H.HT), (H.crys) and (H.Frob) that (H.P) holds. Under these hypotheses, we give a  generalization of \cite[Lemma~3.3]{kim08}.
\begin{lemma} \label{sec4:lemCtrl1}
Let $I \subset \{1, \ldots, g\}$ with $\#I = g_+=g_-$. Then $ \# S_I( A_m/ F_n) / \# S_{I^c}( A^*_m/ F_n)$ is bounded as $n$ and $m$ vary. 
\end{lemma}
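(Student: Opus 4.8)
The strategy is to count local and global cohomology groups using the global Euler characteristic formula together with the orthogonality of the local conditions at $p$ (Lemma~\ref{secOrth:lem2}) and the self-duality of the pairing, in the spirit of Greenberg's and Kim's comparison of Selmer groups with their duals. First I would set up the Poitou--Tate exact sequence for $A_m$ over $F_n$, which relates $S_I(A_m/F_n)$, the global cohomology $H^1(F_n, A_m)$, the product of local quotients $\prod_{v} H^1(F_{n,v},A_m)/H^1_{\bullet}(F_{n,v},A_m)$, and the dual Selmer group attached to the orthogonal complements of the chosen local conditions under local Tate duality. Since the pairing $A_m[p^m]\times A^*_m \to \mu_{p^m}$ is perfect and $G_\QQ$-equivariant, the dual Selmer group appearing is exactly $S_{I^c}(A^*_m/F_n)$: away from $p$ the unramified condition $H^1_f$ is its own annihilator, and at $p$ Lemma~\ref{secOrth:lem2} (passed down from Iwasawa cohomology to finite level via the identifications in \S\ref{S:control} between $H^1(\QQ_{p,n},A)$ and $\Gamma^1_n$-invariants, and between $H^1(\QQ_{p,n},A[p^m])$ and $p^m$-torsion) identifies $H^1_{I^c}$ as the annihilator of $H^1_I$.

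Next I would extract from Poitou--Tate the identity
\[
\frac{\# S_I(A_m/F_n)}{\# S_{I^c}(A^*_m/F_n)}
= \frac{\# H^1(F_n,A_m)\cdot \#\widehat{H}^2(A_m)}{\# H^0(F_n,A^*_m)^\vee \cdot \prod_{v\mid p}\# H^1_I(\QQ_{p,n},A_m)\cdot \prod_{v\nmid p}\# H^1_f(F_{n,v},A_m)},
\]
or rather an analogous ratio in which every factor can be controlled: the global Euler characteristic formula expresses $\# H^1(F_n,A_m)$ in terms of $\# H^0$, $\# H^2$ and the product of local terms $\# H^0(F_{n,v},A_m)$ over archimedean and $p$-adic places; the local terms $\# H^1(\QQ_{p,n},A_m)$ are given by the local Euler characteristic formula; and for $v\nmid p$ the quotient $\# H^1(F_{n,v},A_m)/\# H^1_f(F_{n,v},A_m)$ equals $\# H^0(F_{n,v},A^*_m)$, which is bounded independently of $n,m$ once one fixes the finite set of ramified primes (using that $A_m$ is unramified outside $p$ and the finitely many bad primes of $T$). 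The upshot is that, after cancellation, the ratio reduces to a bounded product of $\# H^0$'s, $\# H^2$'s over global and archimedean places, and a term measuring how far $H^1_I(\QQ_{p,n},A_m)$ is from being exactly half of $H^1(\QQ_{p,n},A_m)$. The assumption $g_+ = g_- = g/2$ is precisely what makes this last discrepancy bounded: by Proposition~\ref{secReview:prop1} and Lemma~\ref{secReview:lem1}, $\col_{T,I}$ cuts out a local condition of corank $g_- = g/2$ with bounded (finite-index) defect, independent of $n$, and tensoring with $\Lambda^1/(f^e)$ and taking $p^m$-torsion only introduces factors that telescope or stay bounded.

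The main obstacle, and the step I would spend the most care on, is controlling the $p$-local term $\# H^1_I(\QQ_{p,n},A_m)$ uniformly in both $n$ and $m$: one must show that the finite-index defect in Proposition~\ref{secReview:prop1} does not grow with $n$ after descent along $\Gamma^1_n$, and that the interaction of the operations $\otimes \Lambda^1/(f^e)$, taking $\Gamma^1_n$-invariants, and taking $p^m$-torsion does not accumulate unbounded error terms — here hypotheses (H.F) and (H.nT) are essential, since they force the relevant $H^0$'s to vanish or be uniformly finite and make the invariants-versus-coinvariants ambiguity harmless. Once that local bound is in hand, the remaining global $H^0$ and $H^2$ contributions are handled exactly as in \cite{kim08,greenberg89}: $H^0(F_n,A_m)$ is bounded by (H.nT) and a standard argument, and $H^2(F_n,A_m)$ is dualized back to an $H^0$ of the dual via Poitou--Tate, closing the loop. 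Assembling these bounds gives that $\# S_I(A_m/F_n)/\# S_{I^c}(A^*_m/F_n)$ stays within a constant independent of $n$ and $m$.
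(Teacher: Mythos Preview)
Your plan is essentially the paper's proof: both use the Poitou--Tate/Greenberg--Wiles counting argument together with Lemma~\ref{secOrth:lem2} to identify the dual Selmer group, then reduce the ratio to a global Euler characteristic times the $p$-local index $[H^1(\QQ_{p,n},A_m):H^1_I(\QQ_{p,n},A_m)]^{-1}$, and finally bound $\#H^1_I(\QQ_{p,n},A_m)$ using the finite-index statement of Proposition~\ref{secReview:prop1}. The paper in fact obtains the exact identity
\[
\#S_I(A_m/F_n)=\chi_{\text{glob.},F_n}(A_m)^{-1}\cdot[H^1(\QQ_{p,n},A_m):H^1_I(\QQ_{p,n},A_m)]^{-1}\cdot\#S_{I^c}(A^*_m/F_n),
\]
and the hypothesis $g_+=g_-$ is used exactly where you anticipate, to match the exponent of $\chi_{\text{glob.}}^{-1}$ against that of $\#H^1(\QQ_{p,n},A_m)/\#H^1_I$.

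One correction: your assertion that for $v\nmid p$ the term $\#H^0(F_{n,v},A^*_m)$ is bounded independently of $n,m$ is not true in general---nothing in the hypotheses controls local invariants at bad primes away from $p$. You do not need this bound: in the Greenberg--Wiles formula the contribution at such $v$ is $\#H^1_f(F_{n,v},A_m)/\#H^0(F_{n,v},A_m)=1$, so these places drop out entirely. The paper achieves the same cancellation by matching the factor $\prod_{v\nmid p}\#H^2(F_{n,v},A_m)^{-1}$ coming from $\#L/\#P^1_\Sigma$ against the $\#P^2_\Sigma$ term arising from global duality and (H.nT). Once you replace your ``bounded'' claim by this exact cancellation, your outline coincides with the paper's argument.
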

\begin{proof}
We shall first of all compute the quantity $$\chi_{\text{glob.},F_n}(A_m)^{-1} \cdot [H^1(\QQ_{p,n}, A_m) : H^1_I (\QQ_{p,n}, A_m)]^{-1}$$ using Lemma~\ref{cardinaux}.

Since $F_n$ is totally real, one has $\chi_{\text{glob.},F_n}(A_m) = \# (A_m^-)^{-[F_n : \QQ]}$, where $A_m^-$ is the subgroup of $A_m$ on which complex conjugation acts by $-1$. As complex conjugation acts trivially on $\Lambda^1/(f^e)$, we get $\chi_{\text{glob.},F_n}(A_m) = p^{-m\cdot[F_n : \QQ]\cdot e \cdot \mathrm{deg}(f) \cdot g_-}$. Furthermore, we know that $\# H^1(\QQ_{p,n}, A_m) = p^{-m\cdot[\QQ_{p,n} : \QQ_p] \cdot e \cdot \mathrm{deg}(f) \cdot g}$.

It remains to compute $\# H^1_{I}(\QQ_{p,n}, A_m)$. By definition,
$$
H^1_{I}(\QQ_{p,n}, A_m) = \left(H^1_{I}(\QQ_{p,\infty}, T^\dagger )\otimes \Lambda^1 /(f^e)\right)^{\Gamma^1_n}[p^m].
$$
From hypothesis (H.F),
$$
\# H^1_{I}(\QQ_{p,\infty}, T^\dagger ) = \# H^1_{I}(\QQ_p(\mu_{p^\infty}), T^\dagger )^\Delta .
$$
If we write $(\cdot)^\vee$ for the Pontryagin dual, 
$$
H^1_{I}(\QQ_p(\mu_{p^\infty}), T^\dagger )^\Delta \simeq (\image \col_{T,I}^\Delta)^\vee .
$$
Proposition~\ref{secReview:prop1} gives an inclusion of $\image \col_{T,I}^\Delta$ into a free $\Lambda^1$-module  of rank $g_+$ (say $N_{I,\Delta}$) with finite index:
$$
0 \rightarrow \image \col_{T,I}^\Delta \rightarrow N_{I,\Delta} \rightarrow K_{I,\Delta} \rightarrow 0,
$$
which gives the long exact sequence
$$
\begin{array}{c}
0 \rightarrow (K_{I,\Delta}^\vee \otimes \Lambda^1/(f^e))^{\Gamma^1_n} \rightarrow (N_{I,\Delta}^\vee \otimes \Lambda^1/(f^e))^{\Gamma^1_n} \\ \rightarrow ((\image \col_{T,I}^\Delta)^\vee \otimes \Lambda^1/(f^e))^{\Gamma^1_n} 
\rightarrow H^1 (\Gamma^1_n,K_{I,\Delta}^\vee \otimes \Lambda^1/(f^e) ) .
\end{array}
$$
Since $H^1 (\QQ_{p,\infty}/\QQ_{p,n},K_{I,\Delta}^\vee \otimes \Lambda^1/(f^e) ) $ is finite it implies that
$$
\begin{array}{rl}
\#  ((\image \col_{T,I}^\Delta)^\vee [p^m] \otimes \Lambda^1/(f^e))^{\Gamma^1_n} & \leqslant C \cdot \# (N_{I,\Delta}^\vee \otimes \Lambda^1/(f^e))^{\Gamma^1_n} [p^m] \\
 & \leqslant C \cdot p^{m\cdot e \cdot \mathrm{deg}(f)\cdot g_+ \cdot p^n}
\end{array}
$$
where $C<\infty$ is independant of $n$ and $m$.
Hence the lemma follows.
\end{proof}

Note that our result is actually weaker than \cite[Lemma~3.3]{kim08}, which in fact gives an equality. However, our result is sufficient to prove the following generalization of Lemma~3.4 in \textit{op. cit.}

\begin{lemma} \label{sec4:lemCtrl2}
For all positive integers $n$ and $m$, the natural map
$$
S_I( A_m / F_n) \rightarrow S_I( A / F_n)[p^m]
$$
is injective and its cokernel is finite and bounded as $n$ and $m$ vary.
\end{lemma}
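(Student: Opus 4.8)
The plan is to compare the short exact sequence $0 \to A_m \to A \xrightarrow{p^m} A \to 0$ (noting that $A[p^m] = A_m$ since $A$ is $p$-divisible as all of our coefficient modules $T^\dagger, T^\dagger_{f^e}, \ldots$ are built from $\mathbb{Q}_p/\mathbb{Z}_p$-type objects) with the local conditions defining the Selmer groups. Taking $G_{\mathbb{Q}_\Sigma/F_n}$-cohomology of this sequence and using $H^0(\mathbb{Q}_\Sigma/F_n, A) / p^m \hookrightarrow H^1(\mathbb{Q}_\Sigma/F_n, A_m)$, one gets an exact sequence
\[
0 \to H^1(\mathbb{Q}_\Sigma/F_n, A)[p^m] \to \text{(a quotient of }H^1(\mathbb{Q}_\Sigma/F_n, A_m)\text{)} \to \cdots
\]
Wait — more precisely, one has the clean statement $H^1(\mathbb{Q}_\Sigma/F_n, A_m) \twoheadrightarrow H^1(\mathbb{Q}_\Sigma/F_n, A)[p^m]$ with kernel $H^0(\mathbb{Q}_\Sigma/F_n, A)/p^m$, which is \emph{finite} and bounded independently of $n, m$ since $H^0(\mathbb{Q}_\Sigma/F_n, A)$ is a cofinitely generated $\mathbb{Z}_p$-module of bounded corank (it is a subquotient of $\cM_p^\vee$, whose $F_\infty$-invariants vanish by the running hypothesis $H^0(F_\infty, \cM_p^\vee) = 0$, forcing finiteness after projecting to $\Delta$-components). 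The injectivity in the statement is then essentially immediate: an element of $S_I(A_m/F_n)$ mapping to $0$ in $S_I(A/F_n)[p^m]$ already maps to $0$ in $H^1(\mathbb{Q}_\Sigma/F_n, A)$, so it comes from $H^0(\mathbb{Q}_\Sigma/F_n, A)/p^m$ — but one must check it actually lies in $S_I$, i.e.\ satisfies all the local conditions, which forces it into the image of a finite bounded group intersected with the Selmer conditions; the usual argument (as in Kim \cite[Lemma~3.4]{kim08}) shows this intersection is trivial, or one absorbs it into the cokernel bound.

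For the cokernel, the plan is the standard diagram chase: fit the map $S_I(A_m/F_n) \to S_I(A/F_n)[p^m]$ into a commutative diagram with top row the defining sequence for $S_I(A_m/F_n)$ inside $H^1(\mathbb{Q}_\Sigma/F_n, A_m)$ and bottom row the $[p^m]$-torsion of the defining sequence for $S_I(A/F_n)$, the vertical maps being induced by $A_m = A[p^m] \to A$. By the snake lemma, the cokernel is controlled by (i) the cokernel of $H^1(\mathbb{Q}_\Sigma/F_n, A_m) \to H^1(\mathbb{Q}_\Sigma/F_n, A)[p^m]$, which vanishes as noted above; and (ii) the kernels of the maps on the local quotient terms $\tfrac{H^1(F_{n,v}, A_m)}{H^1_f(F_{n,v}, A_m)} \to \tfrac{H^1(F_{n,v}, A)}{H^1_f(F_{n,v}, A)}[p^m]$ for $v \nmid p$ and $\tfrac{H^1(\mathbb{Q}_{p,n}, A_m)}{H^1_I(\mathbb{Q}_{p,n}, A_m)} \to \tfrac{H^1(\mathbb{Q}_{p,n}, A)}{H^1_I(\mathbb{Q}_{p,n}, A)}[p^m]$. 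Hence $\#\coker$ is bounded by a product, over $v \in \Sigma(F_n)$, of the sizes of these local kernels, each in turn bounded by $\#H^0(F_{n,v}, A)$ (resp.\ a term coming from $H^1_I$ versus its mod-$p^m$ version). The finitely many $v \nmid p$ contribute boundedly because $H^0(F_{n,v}, A)$ is finite of bounded order for $v$ unramified in $F_\infty$ and the ramified $v$ stay finite in number; the delicate term is at $p$, where one uses the explicit description $H^1_I(\mathbb{Q}_{p,n}, A_m) = (H^1_I(\mathbb{Q}_{p,\infty}, T^\dagger) \otimes \Lambda^1/(f^e))^{\Gamma^1_n}[p^m]$ together with Proposition~\ref{secReview:prop1} (the finite-index inclusion of $\image\col_{T,I}^\Delta$ into a free $\Lambda^1$-module) exactly as in Lemma~\ref{sec4:lemCtrl1}, so that the discrepancy between $H^1_I(\mathbb{Q}_{p,n}, A)[p^m]$ and $H^1_I(\mathbb{Q}_{p,n}, A_m)$ is bounded by the order of the finite cokernel module $K_{I,\Delta}$ (independent of $n$ and $m$).

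I expect the main obstacle to be the local term at $p$: one must show that passing between $A$ and $A_m = A[p^m]$ inside the signed local condition $H^1_I$ costs only a bounded amount, uniformly in both $n$ and $m$. This is not formal because $H^1_I$ is defined via a $\Lambda^1$-module tensored with $\Lambda^1/(f^e)$ and then taking $\Gamma^1_n$-invariants, and the operations ``$\otimes \Lambda^1/(f^e)$'', ``$(-)^{\Gamma^1_n}$'' and ``$[p^m]$'' do not commute on the nose; the error terms are governed by $H^1(\Gamma^1_n, -)$ of the relevant modules. The key input making this work is that $\image\col_{T,I}^\Delta$ sits in a \emph{free} $\Lambda^1$-module with \emph{finite} cokernel (Proposition~\ref{secReview:prop1}), so the relevant $H^1(\Gamma^1_n, K_{I,\Delta}^\vee \otimes \Lambda^1/(f^e))$ is finite of order bounded independently of $n,m$ — this is precisely the mechanism already exploited in the proof of Lemma~\ref{sec4:lemCtrl1}, and the present lemma is obtained by feeding that bound through the snake-lemma diagram above. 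Everything else (the $v \nmid p$ terms, global $H^1$ versus $H^1[p^m]$, the injectivity) is routine and follows the templates of Greenberg \cite{greenberg89} and Kim \cite{kim08}.
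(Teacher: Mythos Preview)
Your skeleton is the same as the paper's (a snake-lemma comparison of the two defining sequences), but you have inverted where the work lies, and one of your bounds is not right.

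First, by hypothesis (H.nT) one has $H^0(F_n,A)=0$ (the global invariants embed into the local ones at $p$), so the middle vertical map $H^1(\QQ_\Sigma/F_n,A_m)\to H^1(\QQ_\Sigma/F_n,A)[p^m]$ is an \emph{isomorphism}, not merely a surjection with finite bounded kernel. Injectivity of $S_I(A_m/F_n)\to S_I(A/F_n)[p^m]$ is therefore immediate from the diagram; your hedged discussion of ``absorbing into the cokernel bound'' is unnecessary.

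Second, and more importantly, the $p$-local term is not delicate at all: recall that in this paper $H^1_I(\QQ_{p,n},A_m)$ is \emph{defined} to be $H^1_I(\QQ_{p,n},A)[p^m]$. Combined with $H^1(\QQ_{p,n},A_m)\cong H^1(\QQ_{p,n},A)[p^m]$ (again from (H.nT)), the map $f_p$ on the quotients at $p$ is injective on the nose. There is no need to invoke Proposition~\ref{secReview:prop1} or the module $K_{I,\Delta}$ here; that machinery was used in Lemma~\ref{sec4:lemCtrl1} for a different purpose (computing the \emph{size} of $H^1_I$), not for the mod-$p^m$ compatibility.

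The actual content is at $v\nmid p$, and here your bound is the wrong one. Bounding $\ker f_v$ by $\#H^0(F_{n,v},A)$ does not give something uniform in $n$ (as $n$ grows, the decomposition group shrinks and these invariants can grow). The paper instead passes to $F_{n,v}^{\mathrm{ur}}$: since $H^1/H^1_{\mathrm{ur}}$ embeds in $H^1(F_{n,v}^{\mathrm{ur}},-)$, one bounds $\ker f_v$ by $\#\bigl(A^{I_v}/p^m A^{I_v}\bigr)\le \#\bigl(A^{I_v}/(A^{I_v})_{\mathrm{div}}\bigr)$, which is finite and depends only on the inertia group $I_v$, hence is independent of both $n$ and $m$. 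Finiteness of the product then uses that no prime splits completely in $\QQ_\infty/\QQ$.
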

\begin{proof}
Consider the diagram 
\begin{center}
\begin{tikzcd}
0 \arrow{r} & S_I( A_m/ F_n) \arrow{d} \arrow{r} & H^1(\QQ_\Sigma/F_n, A_m) \arrow{d} \arrow{r} &  \prod\limits_{v \in \Sigma(F_n)} \frac{H^1(F_{n,v}, A_m)}{ H^1_{\loc}(F_{n,v}, A_m)} \arrow{d}{\prod f_v} \\
0 \arrow{r} & S_I( A/ F_n)[p^m] \arrow{r} & H^1(\QQ_\Sigma/F_n, A)[p^m] \arrow{r} & \prod\limits_{v \in \Sigma(F_n)} \frac{H^1(F_{n,v}, A)}{H^1_{\loc}(F_{n,v}, A)}.
\end{tikzcd}
\end{center}
We already know that the center vertical map is an isomorphism. Since $H^1_{I}(\QQ_{p,n}, A_m) = H^1_{I}(\QQ_{p,n}, A)[p^m]$, the map $f_p$ is injective.
The local condition is the unramified condition for $v \nmid p$, thus one has 
$$
\begin{array}{rl}
H^1(F_{n,v}, A)/H^1_{f}(F_{n,v}, A) & \subset H^1(F_{n,v}^{\text{ur}}, A) \\
H^1(F_{n,v}, A_m)/H^1_{f}(F_{n,v}, A_m) & \subset H^1(F_{n,v}^{\text{ur}}, A_m) 
\end{array}
$$
with $F_{n,v}^{\text{ur}}$ the maximal unramified extension of $F_{n,v}$.

The short exact sequence $0 \rightarrow A_m \rightarrow A \xrightarrow{p^m} A \rightarrow 0$ gives
$$
A^{I_v}/p^m A^{I_v} = \ker (H^1(F_{n,v}^{\text{ur}}, A_m) \rightarrow H^1(F_{n,v}^{\text{ur}}, A))
$$
and we have
$$
\# A^{I_v}/p^m A^{I_v}  \leqslant \# A^{I_v}/ (A^{I_v})_\text{div} < \infty.
$$
Since no prime splits completely in $\QQ_\infty/\QQ$, we conclude that $\ker \prod f_v$ is bounded as $n$ and $m$ vary. Applying the snake lemma, the lemma follows. 
\end{proof}

\begin{remark} \label{sec4:remarkIsotypic}
The results above are stated for $F=\QQ$ and the isotypic component of the trivial character of $\Delta$.
Let $\eta$ be a character on $\Delta$ and denote by $\overline{\eta}$ its complex conjugate and by $T^\dagger_\eta$ the $\Zp$-module $T^\dagger$ with action of the Galois group twisted by $\eta$, then 
$$
H^1(\QQ_p(\mu_{p^\infty}),T^\dagger)^{\overline{\eta}} = H^1(\QQ_p(\mu_{p^\infty}),T^\dagger_\eta)^\Delta = H^1(\QQ_{p,\infty},T^\dagger_\eta).
$$
The results above (for the isotypic component of the trivial character) in fact hold for every isotypic component because we may replace $T^\dagger$ and $T^{\dagger,*}$ by $T^\dagger_\eta$ and $T^{\dagger,*}_{\overline{\eta}}$ respectively in the proofs above.

Similarly, suppose that $F/\QQ$ is a general abelian extension in which $p$ is unramified with $[F:\QQ]$ coprime to $p$, we may prove the above results for an isotypic component corresponding to a character of $\Gal(F/\QQ)$.
\end{remark}

\section{Proof of functional equations}\label{S:proof}

We first review the following proposition of \cite{greenberg89}, which is a crucial ingredient of our proof for the functional equation.
\begin{proposition} \label{sec5:propGreenberg}
Let $X$ and $Y$ be two co-finitely generated $\Lambda^1$-modules such that
\begin{enumerate}
\item[i)] for all irreducible distinguished polynomial $f \in \Lambda^1$ and positive integer $e$,
$$
\operatorname{corank}_{\mathbb{Z}_p} (X \otimes \Lambda^1/(f^e))^{\Gamma^1} =\operatorname{corank}_{\mathbb{Z}_p} (Y \otimes \Lambda^1/(f^e))^{\Gamma^1} ;
$$
\item[ii)] for all positive integers $n$ and $m$, $\# X^{\Gamma^1_n}[p^m] / \# Y^{\Gamma^1_n}[p^m]$
is bounded as $n$ vary.
\end{enumerate}
Then, the Pontryagin duals of $X$ and $Y$ are pseudo-isomorphic.
\end{proposition}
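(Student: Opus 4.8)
The plan is to pass to Pontryagin duals and prove the statement for the finitely generated $\Lambda^1$-modules $\mathfrak X := X^\vee$ and $\mathfrak Y := Y^\vee$. Since $\Lambda^1 \cong \ZpX$ is a two-dimensional regular local ring, the structure theorem applies: $\mathfrak X \sim \Lambda^1{}^{r_X} \oplus \bigoplus_i \Lambda^1/(\pi_i^{a_i})$ and similarly for $\mathfrak Y$, where the $\pi_i$ run through height-one primes (that is, $p$ or irreducible distinguished polynomials). Since $\mathfrak X$ and $\mathfrak Y$ are each pseudo-isomorphic to the elementary module determined by their rank and their elementary divisors, and since a finite module localizes to $0$ at a height-one prime while elementary divisors form a complete isomorphism invariant over the discrete valuation ring $\Lambda^1_{\mathfrak p}$, it suffices to show $r_X = r_Y$ and that, for every height-one prime $\mathfrak p$, the multiset of exponents of $\mathfrak p$ occurring in $\mathfrak X$ equals that occurring in $\mathfrak Y$. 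The remaining task is to read these data off from (i) and (ii): roughly, (i) will handle the distinguished primes (and the rank), and (ii) will handle the prime $(p)$.

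For a fixed irreducible distinguished $f$, the key point is the identification
\[
\big(X \otimes \Lambda^1/(f^e)\big)^{\Gamma^1} \;\cong\; \big(\mathfrak X/f^e\mathfrak X\big)^{\vee},
\]
which follows from Pontryagin duality together with the usual unbalancing isomorphism for the diagonal $\Gamma^1$-action on $X \otimes_{\Zp} \Lambda^1/(f^e)$ and the computation $\Hom_{\Zp}(\Lambda^1/(f^e),\Zp) \cong \Lambda^1/((f^\iota)^e)$ used to keep track of that action. Hence $\operatorname{corank}_{\Zp}(X \otimes \Lambda^1/(f^e))^{\Gamma^1} = \operatorname{rank}_{\Zp}\mathfrak X/f^e\mathfrak X$, and reading this off from the elementary divisor decomposition — ranks being insensitive to the finite kernel and cokernel of a pseudo-isomorphism, and using $\operatorname{rank}_{\Zp}\Lambda^1/(f^e) = e\deg f$ — gives
\[
\operatorname{corank}_{\Zp}\big(X \otimes \Lambda^1/(f^e)\big)^{\Gamma^1} = \deg(f)\Big(e\, r_X + \sum_i \min(a_i, e)\Big),
\]
the sum running over the elementary divisors $f^{a_i}$ of $\mathfrak X$. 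Now invoke (i): the right-hand side is the same for $\mathfrak X$ and $\mathfrak Y$ for all $f$ and all $e$. Letting $e \to \infty$, with $\sum_i \min(a_i, e) \le \sum_i a_i$ bounded, forces $r_X = r_Y$; then for each finite $e$ the two sums $\sum_i \min(a_i, e)$ (one for $\mathfrak X$, one for $\mathfrak Y$) agree, and since the sequence $e \mapsto \sum_i \min(a_i, e)$ determines the multiset $\{a_i\}$ (its successive differences count the $a_i \ge e$), the exponent multisets of $\mathfrak X$ and $\mathfrak Y$ at every such $(f)$ coincide.

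It remains to match the exponents at $(p)$, using (ii). Here $\#X^{\Gamma^1_n}[p^m] = \#\big(\mathfrak X/(\omega_n, p^m)\mathfrak X\big)$ with $\omega_n = (1+X)^{p^n} - 1$, and the elementary divisor decomposition (again up to a bounded factor) gives
\[
\log_p \#X^{\Gamma^1_n}[p^m] = \Big(m\, r_X + \sum_j \min(\mu_j, m)\Big)p^n + \lambda_m\, n + O_m(1),
\]
where $p^{\mu_j}$ are the elementary divisors of $\mathfrak X$ at $(p)$ and the term $\lambda_m n + O_m(1)$ depends only on the distinguished part of $\mathfrak X$. By the previous step the distinguished parts of $\mathfrak X$ and $\mathfrak Y$ coincide, so the $\lambda_m n$ contributions cancel in the ratio, and boundedness of $\#X^{\Gamma^1_n}[p^m]/\#Y^{\Gamma^1_n}[p^m]$ in $n$ then forces the coefficient of $p^n$ to vanish, i.e.\ $m\, r_X + \sum_j \min(\mu_j, m) = m\, r_Y + \sum_k \min(\mu_k, m)$ for every $m$. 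Letting $m \to \infty$ re-proves $r_X = r_Y$, and the finite-$m$ comparison, exactly as in the previous paragraph, equates the exponent multisets at $(p)$. Combining the two steps yields $r_X = r_Y$ together with equality of all exponent multisets, hence $\mathfrak X \sim \mathfrak Y$.

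The step I expect to be the main obstacle is the identification in the second paragraph: making the passage from $(X \otimes \Lambda^1/(f^e))^{\Gamma^1}$ to a specialization of $\mathfrak X$ precise — tracking the diagonal versus one-sided $\Gamma^1$-action through the twisting isomorphism and the involution $\iota$, and verifying that the finite modules that appear (both from the pseudo-isomorphism and from $H^1(\Gamma^1,-)$) do not affect $\Zp$-ranks. The remaining ingredients — the combinatorial fact that truncated sums recover a partition, and the Iwasawa-theoretic growth estimate for $\#\mathfrak X/(\omega_n, p^m)\mathfrak X$ — are routine.
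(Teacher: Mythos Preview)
The paper does not give its own proof of this proposition: it is stated as a result quoted from \cite{greenberg89}, and the text moves directly to Theorem~\ref{thm:main}. So there is nothing in the paper to compare your argument against.

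That said, your outline is the standard one and is essentially how Greenberg's argument goes: pass to the duals $\mathfrak X$, $\mathfrak Y$, reduce by the structure theorem to matching the $\Lambda^1$-rank and the exponent multisets at each height-one prime, extract the data at distinguished primes from condition~(i) via the identification $(X\otimes\Lambda^1/(f^e))^{\Gamma^1}\cong(\mathfrak X/(f^\iota)^e\mathfrak X)^\vee$ and the combinatorial fact that $e\mapsto\sum_i\min(a_i,e)$ determines $\{a_i\}$, and extract the data at $(p)$ from condition~(ii) via the growth of $\#\mathfrak X/(\omega_n,p^m)$. Two small remarks. First, for the distinguished part $\Lambda^1/f^a$ one has $\#\Lambda^1/(f^a,\omega_n,p^m)\le p^{m\cdot a\deg f}$, so its contribution is already $O_m(1)$ and your $\lambda_m n$ term is in fact zero; this only simplifies the argument. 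Second, condition~(ii) as written is one-sided, and with a one-sided bound you only get $\sum_j\min(\mu_j^X,m)\le\sum_k\min(\mu_k^Y,m)$ for all $m$, which does not by itself force equality of the multisets at $(p)$. In the paper's application the bound holds in both directions by symmetry in $(T,I)\leftrightarrow(T^*(1),I^c)$, and Greenberg's formulation likewise uses a two-sided bound; you should read ``bounded'' here as ``bounded above and below'' for the argument to close.
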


We can now prove our main result.
\begin{theorem}\label{thm:main}
Let $I \subset \{1, \ldots , g\}$ of cardinality $\# I = g_+=g_-$ and $I^c$ its complement. Then $\Sel_I(T^\dagger/F(\mu_{p^\infty}))^\vee$ and $\Sel_{I^c}({T^*(1)}^\dagger/F(\mu_{p^\infty}))^{\vee,\iota}$ are pseudo-isomorphic $\Lambda$-modules.
\end{theorem}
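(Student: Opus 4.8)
The plan is to deduce the statement from Greenberg's criterion, Proposition~\ref{sec5:propGreenberg}, after reducing everything to $\Lambda^1$-modules over $\QQ$, following the strategy of Kim in \cite{kim08}.

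\emph{Step 1 (reduction).} The group $\Gal(F(\mu_{p^\infty})/F_\infty)$ is cyclic of order $p-1$, hence of order prime to $p$; so $\Lambda$ is a finite product of rings each of which is a finite \'etale extension of $\ZpX=\Lambda^1$, and a homomorphism of finitely generated $\Lambda$-modules is a pseudo-isomorphism precisely when each isotypic component is. By Remark~\ref{sec4:remarkIsotypic} every isotypic component of $\Sel_I(T^\dagger/F(\mu_{p^\infty}))$ is again a multi-signed Selmer group over $\QQ$ attached to a Galois twist of $T$, with the $\iota$-twist surviving as an $\iota$-twist of that Selmer group, and the lemmas of \S\ref{S:control} remain valid for such twists. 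We are therefore reduced to proving that $X^\vee$ and $Y^\vee$ are pseudo-isomorphic as $\Lambda^1$-modules, where $X:=\varinjlim_n S_I(T^\dagger/F_n)$ and $Y:=\bigl(\varinjlim_n S_{I^c}({T^*(1)}^\dagger/F_n)\bigr)^\iota$ (for possibly twisted $T$, which is immaterial in what follows). Both $X$ and $Y$ are co-finitely generated over $\Lambda^1$ under our hypotheses, as they sit inside $H^1(\QQ_\Sigma/\QQ_\infty,-)$. The $\iota$ in the definition of $Y$ is there precisely so that the specialization of $Y$ associated with an irreducible distinguished $f\in\Lambda^1$ and $e\ge1$ is governed by $A^*={T^*(1)}^\dagger_{(f^e)^\iota}$ in the notation of \S\ref{S:control}; since $\iota$ stabilises each $\Gamma^1_n$, twisting by it leaves all the cardinalities $\#Y^{\Gamma^1_n}[p^m]$ unchanged.

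\emph{Step 2 (verifying Greenberg's hypotheses).} For hypothesis (ii): Lemma~\ref{sec4:lemCtrl2}, applied with $A=T^\dagger$ and with $A={T^*(1)}^\dagger$, identifies $S_I(T^\dagger[p^m]/F_n)$ with $S_I(T^\dagger/F_n)[p^m]$ and $S_{I^c}({T^*(1)}^\dagger[p^m]/F_n)$ with $S_{I^c}({T^*(1)}^\dagger/F_n)[p^m]$ up to finite cokernel bounded in $n$ and $m$; Lemma~\ref{sec4:lemCtrl1} (with $A=T^\dagger$) bounds the ratio $\#S_I(T^\dagger[p^m]/F_n)/\#S_{I^c}({T^*(1)}^\dagger[p^m]/F_n)$ independently of $n$ and $m$; and a control argument identifies $S_I(T^\dagger/F_n)$ with $X^{\Gamma^1_n}$ and $S_{I^c}({T^*(1)}^\dagger/F_n)$ with $Y^{\Gamma^1_n}$ up to finite bounded error. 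Combining these gives (ii). For hypothesis (i): fix $f\in\Lambda^1$ irreducible distinguished and $e\ge1$, and set $A=T^\dagger_{f^e}$ and $A^*={T^*(1)}^\dagger_{(f^e)^\iota}$. A control argument (using that $\Lambda^1/(f^e)$ is finite free over $\Zp$) matches $(X\otimes\Lambda^1/(f^e))^{\Gamma^1}$ with the base-field Selmer group $S_I(A/\QQ)$ up to finite error, and $(Y\otimes\Lambda^1/(f^e))^{\Gamma^1}$ with $S_{I^c}(A^*/\QQ)$; then Lemma~\ref{sec4:lemCtrl1} at $n=0$, together with Lemma~\ref{sec4:lemCtrl2}, bounds $\#S_I(A[p^m]/\QQ)/\#S_{I^c}(A^*[p^m]/\QQ)$ independently of $m$, and letting $m\to\infty$ forces the two $\Zp$-coranks to coincide, which is (i). Proposition~\ref{sec5:propGreenberg} now yields $X^\vee\sim Y^\vee$; by Step 1 this gives the theorem.

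\emph{Step 3 (where the earlier results enter, and the main difficulty).} The orthogonality Lemma~\ref{secOrth:lem2} is used inside the proof of Lemma~\ref{sec4:lemCtrl1}: it says that, under local Tate duality, the local condition at $p$ cutting out $S_I$ is the exact annihilator of the one cutting out $S_{I^c}$, which is precisely what allows the global (Poitou--Tate) duality in that lemma to exchange the $I$-Selmer group of $T^\dagger$ for the $I^c$-Selmer group of ${T^*(1)}^\dagger$. The main obstacle is the verification of (i): one must pass from the mere \emph{boundedness} of the ratios supplied by Lemmas~\ref{sec4:lemCtrl1}--\ref{sec4:lemCtrl2} to an \emph{exact} equality of $\Zp$-coranks of the specializations associated with every pair $(f,e)$, and correctly identify those coranks with the intrinsic quantities $(X\otimes\Lambda^1/(f^e))^{\Gamma^1}$ of Greenberg's criterion; carrying this out cleanly requires careful bookkeeping of the several control and comparison maps, and of the interplay between the Pontryagin duals, the $\iota$-twists, and the isotypic decomposition.
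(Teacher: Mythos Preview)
Your proposal is correct and follows essentially the same route as the paper: reduce to a single isotypic component via Remark~\ref{sec4:remarkIsotypic}, feed Lemmas~\ref{sec4:lemCtrl1} and~\ref{sec4:lemCtrl2} into Greenberg's criterion (Proposition~\ref{sec5:propGreenberg}), and bridge the gap between the infinite-level Selmer group and the finite-level ones by a control argument. The one place where the paper is sharper than your sketch is the control step: rather than ``up to finite bounded error'', the paper actually proves the \emph{exact} identifications $S_I(T^\dagger_{f^e}/\QQ)=(S_I(T^\dagger/\QQ_\infty)\otimes\Lambda^1/(f^e))^{\Gamma^1}$ and $S_I(T^\dagger/F_n)=S_I(T^\dagger/\QQ_\infty)^{\Gamma^1_n}$, by writing down the usual snake-lemma diagram and observing that the local comparison maps are injective (trivially at $p$ by the very definition of $H^1_I(\QQ_{p,n},T^\dagger_{f^e})$, and at $v\nmid p$ because $\QQ_{\infty,w}/\QQ_v$ is unramified); this makes the verification of (i) and (ii) immediate and avoids the extra bookkeeping you flag in Step~3.
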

\begin{proof}
It is enough to consider the isotypic component for the trivial character, as explained in Remark~\ref{sec4:remarkIsotypic}. 
Since $\Lambda^1/(f^e)$ is a free $\Zp$-modules on which $G_{\QQ_\infty}$ acts trivially, we have 
$$
\begin{array}{rl}
S_I(T^\dagger / \QQ_\infty) \otimes \Lambda^1/(f^e) & = \ker\left(H^1(\QQ_\Sigma/\QQ_\infty, T^\dagger)\otimes \Lambda^1/(f^e) \rightarrow \prod\limits_{w \in \Sigma} \frac{H^1(\QQ_{\infty,w},T^\dagger) \otimes \Lambda^1(f^e)}{H^1_{\loc}(\QQ_{\infty,w},T^\dagger) \otimes \Lambda^1/(f^e)}\right)\\
& = \ker\left(H^1(\QQ_\Sigma/\QQ_\infty, T^\dagger\otimes \Lambda^1/(f^e) ) \rightarrow \prod\limits_{w \in \Sigma} \frac{H^1(\QQ_{\infty,w},T^\dagger \otimes \Lambda^1(f^e))}{H^1_{\loc}(\QQ_{\infty,w},T^\dagger) \otimes \Lambda^1/(f^e)}\right).
\end{array}
$$

For the prime above $p$, we have by definition an injection
$$
 \frac{H^1(\Qp, T^\dagger_{f^e})}{H^1_I(\Qp, T^\dagger_{f^e})} \hookrightarrow \frac{H^1(\QQ_{p,\infty}, T^\dagger_{f^e})}{H^1_I(\QQ_{p,\infty}, T^\dagger) \otimes \Lambda^1/(f^e)} .
$$
For a prime $w$ of $\QQ_\infty$ above a prime $v\neq p$ in $\QQ$, 
since $\QQ_{\infty, w}/\QQ_v$ is an unramified $\Zp$-extension, we have the injection
$$
 \frac{H^1(\QQ_v, T^\dagger_{f^e})}{H^1_{f}(\QQ_v, T^\dagger_{f^e})} \hookrightarrow \frac{H^1(\QQ_{\infty,w}, T^\dagger_{f^e})}{H^1_{f}(\QQ_{\infty,w}, T^\dagger) \otimes \Lambda^1/(f^e)} .
$$
Since $H^1(\QQ, T^\dagger_{f^e}) \simeq H^1(\QQ_\infty, T^\dagger_{f^e})^{\Gamma^1}$, applying the snake lemma on the diagram
\begin{center}
\begin{tikzcd}
0 \arrow{r} & S_I( T^\dagger_{f^e}/ \QQ) \arrow{d} \arrow{r} & H^1(\QQ_\Sigma/\QQ, T^\dagger_{f^e}) \arrow{d} \arrow{r} &  \prod\limits_{v \in \Sigma} \frac{H^1(\QQ_{v}, T^\dagger_{f^e})}{ H^1_{\loc}(\QQ_{v}, T^\dagger_{f^e})} \arrow{d}{\prod f_v} \\
0 \arrow{r} & (S_I( T^\dagger/ \QQ_\infty)\otimes \Lambda^1/(f^e) \arrow{r})^{\Gamma^1} & H^1(\QQ_\Sigma/\QQ_\infty, T^\dagger_{f^e})^{\Gamma^1} \arrow{r} & \left(\prod\limits_{v \in \Sigma} \frac{H^1(\QQ_{\infty,v}, T^\dagger_{f^e})}{H^1_{\loc}(\QQ_{\infty,v}, T^\dagger) \otimes \Lambda^1/(f^e)}\right)^{\Gamma^1}
\end{tikzcd}
\end{center}
we get
$$
S_{I}(T^\dagger_{f^e}/\QQ) = (S_{I}(T^\dagger/\QQ_\infty) \otimes \Lambda^1/(f^e) )^{\Gamma^1}
$$
and similarly one has
$$
S_{I}(T^\dagger/F_n) = S_{I}(T^\dagger/\QQ_\infty)^{\Gamma^1_n}.
$$ 
Now by Lemmas~\ref{sec4:lemCtrl1} and \ref{sec4:lemCtrl2}, we see that $S_{I}(T^\dagger/\QQ_\infty)$ and $S_{I^c}({T^*(1)}^\dagger/\QQ_\infty)^\iota$ satisfy Proposition~\ref{sec5:propGreenberg} and the theorem follows.
\end{proof}

\section{Application to abelian varieties}\label{S:AV}
Let $A$ be an abelian variety of dimension $d$ defined over $F$ with good supersingular reduction at all primes above $p$. Denote by $T_p(A)$ its Tate module at $p$. As explained in the introduction,  $T_p(A)$ satisfies hypotheses (H.HT), (H.crys) and (H.Frob). One has $T_p(A)^\dagger \simeq A^\vee[p^\infty]$ where $A^\vee$ is the dual abelian variety. By the main result of \cite{imai75} hypothesis (H.F) is satisfied. Also, since $F$ is unramified over $\QQ$ and $p>2$,  of \cite[Lemma~5.11]{mazur72} implies that $A^\vee(F)$ has no $p$-torsion, and $F_\infty/F$ being a $\Zp$-extension, $A^\vee(F_\infty)$ has no $p$-torsion as well and (H.nT) is indeed satisfied. Finally, the Hodge structure of $A$ tells us that $g=[F:\QQ]\cdot 2d$ and $g_+ = [F :\QQ]\cdot d$. Thus Theorem~\ref{thm:main} applies in this setting. In particular, we have:

\begin{theorem}\label{thm:AV}
Let $I \subset \{1, \ldots , [F:\QQ]\cdot 2d\}$ of cardinality $\# I= [F:\QQ]\cdot d$ and $I^c$ its complement. Then $\Sel_I(A/F(\mu_{p^\infty}))^\vee$ and $\Sel_{I^c}(A^\vee/F(\mu_{p^\infty}))^{\vee,\iota}$ are pseudo-isomorphic $\Lambda$-modules.
\end{theorem}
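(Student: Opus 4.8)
The plan is to deduce the theorem from Theorem~\ref{thm:main} applied to the $p$-adic Tate module $T=T_p(A^\vee)$ of the dual abelian variety. The paragraph preceding the statement already verifies, for $A$, all of the hypotheses required to run the arguments of \S\ref{S:control} and \S\ref{S:proof}, so the proof reduces to two bookkeeping tasks: transferring those hypotheses to $T=T_p(A^\vee)$, and identifying the two Selmer groups produced by Theorem~\ref{thm:main} with $\Sel_I(A/F(\mu_{p^\infty}))$ and $\Sel_{I^c}(A^\vee/F(\mu_{p^\infty}))$.

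For the first task: the dual of an abelian variety with good supersingular reduction again has good supersingular reduction at every $\p\mid p$, so $T=T_p(A^\vee)$ satisfies (H.HT), (H.crys) and (H.Frob) --- $T|_{G_{F_\p}}$ is crystalline with Hodge--Tate weights in $\{0,1\}$, and the Frobenius on $\DD_{F_\p}(T)$ has constant slope $-1/2\in(0,-1]$, so $1$ is not an eigenvalue --- as well as (H.P) and the dimension identities $g=2d[F:\QQ]$, $g_+=g_-=d[F:\QQ]$; in particular $g_+=g_-=g/2$. Hypotheses (H.F) and (H.nT) for $T^\dagger$ follow from \cite{imai75} and \cite[Lemma~5.11]{mazur72} (together with $F_\infty/F$ being a $\Zp$-extension) exactly as in the paragraph above, and the reduction from general $F$ to $F=\QQ$ and to individual isotypic components is handled by Remark~\ref{sec4:remarkIsotypic}.

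For the second task, the Weil pairings give $G_F$-equivariant isomorphisms $T^\dagger=\Hom(T_p(A^\vee),\mu_{p^\infty})\simeq A[p^\infty]$ and $T^*(1)=T_p(A^\vee)^*(1)\simeq T_p(A)$, whence ${T^*(1)}^\dagger\simeq A^\vee[p^\infty]$. Therefore $\Sel_I(T^\dagger/F(\mu_{p^\infty}))=\Sel_I(A/F(\mu_{p^\infty}))$ and $\Sel_{I^c}({T^*(1)}^\dagger/F(\mu_{p^\infty}))=\Sel_{I^c}(A^\vee/F(\mu_{p^\infty}))$. Since $\#I=[F:\QQ]\cdot d=g_+$ and $\#I^c=g-g_+=g_-=g_+$, the cardinality hypothesis of Theorem~\ref{thm:main} is met, and that theorem yields the asserted pseudo-isomorphism of $\Lambda$-modules.

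I do not anticipate a genuine obstacle here: the substance of the result is entirely contained in Theorem~\ref{thm:main}, which in turn rests on the orthogonality of local conditions (Lemma~\ref{secOrth:lem2}) and the control theorem of \S\ref{S:control}. The only points that call for care are the bookkeeping with the Cartier-dual and $\iota$-twist conventions --- so that $\Sel_I$ of $A$ is paired with $\Sel_{I^c}$ of $A^\vee$ and the twist sits on the correct factor --- and the verification that (H.nT), which constrains the $p$-power torsion of $A$ over the entire cyclotomic tower, genuinely holds, for which one uses that $p$ is odd and unramified in $F$.
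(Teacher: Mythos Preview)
Your proposal is correct and follows essentially the same route as the paper: verify that the Tate module satisfies (H.HT), (H.crys), (H.Frob), (H.P), (H.F) and (H.nT), compute $g_+=g_-=d[F:\QQ]$, and invoke Theorem~\ref{thm:main}. The only cosmetic difference is that the paper works with $T=T_p(A)$ (so $T^\dagger\simeq A^\vee[p^\infty]$) whereas you take $T=T_p(A^\vee)$ to land directly on the statement as written; these choices are interchangeable since the hypotheses and the conclusion are symmetric in $A$ and $A^\vee$.
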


Suppose now that $A$ is an abelian variety with complex multiplication over the CM field $F$ and write $\mathcal{O}_F$ for its ring of integers. We recall from \cite{buyukboduklei1} that, assuming the ``Perrin-Riou-Stark conjecture'' (Conjecture 4.18 of \textit{op. cit.}), the signed-Coleman maps allow us to define signed $p$-adic $L$-functions $\mathcal{L}^{I}(A) \in \mathcal{O}_F \lb \Gamma^1 \rb$. They  satisfy a suitable interpolation property (Proposition 7.15 of \textit{op. cit.}). One of the main results of \textit{op. cit.} (Theorem 7.16)  relates the characteristic ideal of the Pontrygain dual of
$$
\Sel_I(A/F_\infty) = \Sel_I(A/F(\mu_{p^\infty}))^\eta, \quad \text{where $\eta$ is the trivial character of $\Delta$,} 
$$
to the signed $p$-adic $L$-functions. That is 
\begin{equation}\label{eq:mainconjecture}
\operatorname{char}\left(\Sel_I(A/F_\infty)^\vee\right) = \frac{\mathcal{L}^{I}(A)}{(\gamma - 1)^{n(I)}} \cdot \mathcal{O}_F \lb \Gamma^1 \rb,
\end{equation}
where the integer $n(I)\geqslant 0$ depends on the image of the signed-Coleman map and is equal to $0$ when the basis $I$ is choosen to be strongly admissible in the sense of \cite[Definition 3.2]{buyukboduklei0}.

Theorem~\ref{thm:AV} above tells us that
$$
\operatorname{char}\left(\Sel_I(A/F_\infty)^\vee\right) = \operatorname{char}\left(\Sel_{I^c}(A^\vee/F_\infty)^{\iota,\vee}\right).
$$
Thus, we deduce a functional equation for the (conjectural) signed $p$-adic $L$-functions 
$$
\frac{\mathcal{L}^{I}(A)}{(\gamma - 1)^{n(I)}} \sim \iota\left( \frac{\mathcal{L}^{I^c}(A^\vee)}{(\gamma - 1)^{n(I^c)}} \right).
$$
Here, $\alpha\sim\beta $ for $\alpha,\beta\in \mathcal{O}_F \lb \Gamma^1 \rb$ means that $\alpha$ and $\beta$ differ by a unit in $ \mathcal{O}_F \lb \Gamma^1 \rb$.

In \cite{sprung16}, Sprung proved a similar functional equation for supersingular elliptic curves without the CM condition nor  the main conjecture (that is, the equation \eqref{eq:mainconjecture}).  It would be interesting to study whether the same can be done for general supersingular abelian varieties.

\bibliography{references}{}
\bibliographystyle{amsalpha}
\end{document}